\theoremstyle{definition}
\newtheorem{thm}{Theorem}
\newtheorem{lem}[thm]{Lemma}
\newtheorem{prop}[thm]{Proposition}
\newtheorem{cor}[thm]{Corollary}
\newtheorem{defn}{Definition}
\newcommand{\bea}{\begin{eqnarray}}
\newcommand{\eea}{\end{eqnarray}}
\newcommand{\bfx}{\mathbf{x}}
\newcommand{\opn}{\operatorname}
\newcommand{\tor}{\operatorname{Tor}}
\newcommand{\fm}{\mathfrak{m}}
\newcommand{\abs}[1]{\lvert#1\rvert}
\numberwithin{equation}{section}
\numberwithin{thm}{section}
\def\Z{\mathbb{Z}}
\def\C{\mathbb{C}}
\def\F{\mathbb{F}}
\def\H{\mathrm{H}}
\def\PU{\mathrm{PU}}
\def\Aut{\mathrm{Aut}}
\def\U{\mathrm{U}}
\def\CP{\mathbb{CP}}
\def\gm{\Gamma}
\def\Eb{E_{\text{btg}}}
\def\Ef{E_{\text{flex}}}
\def\El{E_{\text{line}}}
\def\Bb{B_{\text{btg}}}
\def\Bf{B_{\text{flex}}}
\def\Bl{B_{\text{line}}}
\def\epl{\epsilon}
\def\bitpro{\mathbf{{Bitangent}}(\epl)}
\def\flexpro{\mathbf{Flex}(\epl)}
\def\linepro{\mathbf{Line}(\epl)}
\newcommand{\extp}{\@ifnextchar^\@extp{\@extp^{\,}}}
\def\@extp^#1{\mathop{\bigwedge\nolimits^{\!#1}}}
\title[Topological complexity of enumerative problems and $B\PU_n$]{Topological complexity of enumerative problems and classifying spaces of $\PU_n$}
\author{Weiyan Chen}
\address{Weiyan Chen, Yau Mathematical Sciences Center, Tsinghua University, Shuangqing Building Complex A, Haidian district, Beijing, China.}
\email{chwy@tsinghua.edu.cn}
\author{Xing Gu}
\address{Xing Gu, Institute for Theoretical Sciences, School of Science, Westlake University, 600 Dunyu Road, Sandun town, Xihu district, Hangzhou 310030, Zhejiang Province, China.}
\email{guxing@westlake.edu.cn}
\thanks{}
\subjclass[2020]{55M30, 55R80, 55R40}
\keywords{topological complexity, Schwarz genus, cubic surfaces, quartic curves, projective unitary groups}
\begin{document}

\begin{abstract}
We study the topological complexity, in the sense of Smale, of three enumerative problems in algebraic geometry: finding the 27 lines on cubic surfaces, the 28 bitangents and the 24 inflection points on quartic curves. In particular, we prove lower bounds for the topological complexity of any algorithm that finds solutions to the three problems and for the Schwarz genera of their associated covers. The key is to understand cohomology classes of the classifying spaces of  projective unitary groups $\PU_n$. 
\end{abstract}

\maketitle

\section{Introduction}
Enumerative geometry is concerned with counting the number of solutions to geometric problems. For example, it is classically known that over $\C$
\begin{enumerate}
    \item any smooth cubic surface has 27 lines;
    \item any smooth quartic plane curve has 28 bitangent lines;
    \item any generic quartic plane curve has 24 inflection points.
\end{enumerate}
After knowing the number of solutions, a natural   question is: How complex are the solutions to these problems? In this paper, we study the \emph{topological complexity}, in the sense of Smale \cite{Smale}, of the three enumerative problems listed above.

Following Smale, by an \emph{algorithm} we mean a finite rooted tree consisting of a root for the input, leaves for the output, and internal nodes of the following two types:
    \begin{quote}
    \begin{center}
    \emph{computation nodes}\ \ \begin{tikzpicture}[scale=0.3]
    \draw (0,0) circle(.2);
    \draw[->] (0,1)-- (0,0.1);
    \draw[->] (0,-0.1)-- (0,-1);
    \end{tikzpicture}
    \qquad and \qquad 
    \emph{branching nodes}\ \  \begin{tikzpicture}[scale=0.3]
    \draw (0,0) circle(.2);
    \draw[->] (0,1)-- (0,0.1);
    \draw[->] (-0.08,-0.08)-- (-0.8,-0.8);
    \draw[->] (0.08,-0.08)-- (0.8,-0.8);
    \end{tikzpicture}\ \ .
        \end{center}
\end{quote}
The \emph{topological complexity} of an algorithm is the number of branching nodes in the tree. The {topological complexity of a problem $P$} is the minimum of the topological complexity of any algorithm solving $P$. Smale \cite{Smale} first proved a lower bound for the topological complexity of the problem of finding roots of a polynomial $f(z)=0$. Smale's lower bounds were later improved by Vassiliev \cite{Vassiliev},  De Concini-Procesi-Salvetti \cite{DPS}, and Arone \cite{Arone}. After results about finding roots of polynomials which are 0-dimensional objects, it is natural to ask similar question about solutions to enumerative problems for high-dimensional objects such as curves and surfaces,  which is mostly unknown. A first step in this direction is the work by the first author and Wan \cite{Chen-Wan} concerning the topological complexity of finding inflection points on cubic plane curves. In this paper, we consider the following problems: 
\begin{itemize}
    \item $\linepro$: Given any cubic surface defined by a homogeneous polynomial $F(x,y,z,w)$ of degree 3, find all of its 27 lines $(l_1,\cdots,l_{27})$ within $\epl$.
    \item $\bitpro$: Given any quartic curve defined by a homogeneous polynomial $F(x,y,z)$ of degree 4, find all of its 28 bitangent lines $(l_1,\cdots,l_{28})$ within $\epl$.
    \item $\flexpro$: Given any quartic curve defined by a homogeneous polynomial $F(x,y,z)$ of degree 4, find all of its 24 inflection points $(p_1,\cdots,p_{28})$ within $\epl$.
\end{itemize}
An algorithm is said to solve the problem $\linepro$ if it inputs an arbitrary homogeneous cubic polynomial $F(x,y,z,w)$ and outputs a sequence of lines $(l_1',\cdots,l_{27}')$ such that each $l_i'$ is $\epl$-close to a lines $l_i$ where $l_i$'s are the 27 distinct lines on the cubic surface defined by $F$. Here, the ``within $\epl$" condition can be measured by any metric on the Grassmannian consisting of lines in $\CP^3$ inducing the standard topology. Similar conditions hold for $\bitpro$ and $\flexpro$.

\begin{thm}
    \label{TC main theorems}
    When $\epl$ is sufficiently small, we have
    \begin{enumerate}
    \item the topological complexity of the problem $\linepro$ is at least 15,
    \item the topological complexity of the problem $\bitpro$ is at least 8, 
    \item the topological complexity of the problem $\flexpro$ is at least 8.
    \end{enumerate}
\end{thm}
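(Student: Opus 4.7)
The plan is to translate each topological complexity lower bound into a cup-length statement in the mod $p$ cohomology of $B\PU_n$ for $n = 3$ or $4$. For each problem, let $X$ be the parameter space of smooth inputs ($X = \xc$ for lines and $X = \xq$ for bitangents and flexes) and let $Y \to X$ denote the associated ordered-solutions cover ($\yc$, $\yq$, or its flex analogue). Following the Smale--Vassiliev framework as developed by De Concini--Procesi--Salvetti and Arone (cited in the introduction), the topological complexity of the corresponding problem is bounded below by the cup-length of $\ker\bigl(H^*(X; \F_p) \to H^*(Y; \F_p)\bigr)$ for any prime $p$. It therefore suffices to exhibit, in each case, classes in this kernel whose $15$-fold, $8$-fold, and $8$-fold cup products, respectively, are nonzero.

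To produce such classes, I would exploit the projective unitary actions: $\PU_4 = \PGL_4(\C)$ acts on $\xc$ and $\PU_3 = \PGL_3(\C)$ acts on $\xq$, and each action lifts to the corresponding ordered cover so as to make the cover $\PU_n$-equivariant. Away from a codimension-$\geq 2$ locus of automorphism-rich inputs the actions are free, so the quotient maps are principal $\PU_n$-bundles classified by maps to $B\PU_n$; this gives classifying maps $X/\PU_n \to B\PU_n$ and $Y/\PU_n \to B\PU_n$ forming a commutative square, through which $H^*(B\PU_n; \F_p)$ pulls back into $H^*(X; \F_p)$. The kernel classes I need are exceptional $p$-torsion generators of $H^*(B\PU_n; \F_p)$ --- Brauer-type classes and their higher analogues --- and they pull back to zero in $H^*(Y; \F_p)$ because the monodromy group of the ordered cover ($W(E_6)$ for the lines problem, $\mathrm{Sp}_6(\F_2)$ for the bitangent problem, and the analogous group for the flex problem) acts non-trivially on them, so the ordering of solutions trivializes the associated obstructions.

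The remaining step, which I expect to be the main technical obstacle, is a purely algebraic cup-length computation inside $H^*(B\PU_n; \F_p)$ with $(p, n) = (2, 4)$ for the lines case and $(p, n) = (3, 3)$ for the quartic curves cases. Using the explicit presentations of these cohomology rings due to Kameko--Mimura, Vistoli, and the second author, I would exhibit monomials in the exceptional torsion generators of total length $15$ (for $\PU_4$ at $p = 2$) and $8$ (for $\PU_3$ at $p = 3$) that survive the defining relations and lie in the geometric kernel identified above. The torsion subrings of $H^*(B\PU_n; \F_p)$ satisfy intricate relations, so producing non-vanishing cup products of the required length while keeping each factor inside the kernel requires careful monomial bookkeeping; here the second author's structural results on $H^*(B\PU_n; \F_p)$ provide the combinatorial control that makes the verification feasible, and the numerical gap between $15$ and $8$ reflects the richer $2$-torsion structure of $B\PU_4$ compared with the $3$-torsion of $B\PU_3$.
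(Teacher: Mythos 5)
Your proposal shares the paper's high-level intuition — exploit the $\PU_n$-actions and classes from $H^*(B\PU_n;\F_p)$ — but it misses the single decisive move and contains several errors that would defeat it.

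\textbf{The missing idea: restrict to a compact orbit.} You propose to work with the full parameter space $X$ and its quotient $X/\PU_n$, treating $X\to X/\PU_n$ as a principal $\PU_n$-bundle. But then any class pulled back from $B\PU_n$ into $H^*(X;\F_p)$ factors through $H^*(X/\PU_n;\F_p)$, and the moduli space $\Bl/\PGL_4(\C)$ of cubic surfaces has complex dimension $19-15=4$, hence cohomological dimension at most $8$; a $15$-fold cup product of positive-degree classes there is automatically zero. Worse, the composite $X\to X/\PU_n\to B\PU_n$ classifies the pullback of the universal bundle, which is the trivial bundle over $X$, so this map is nullhomotopic and pulls every positive-degree class back to zero. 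The paper avoids both problems entirely by \emph{restricting} the cover along the orbit map $\eta:\PU_4/K\to\Bl$ of a single well-chosen surface (the Fermat cubic), with $K\cong(\Z/3)^3\subset\PU_4$ acting faithfully on the $27$ lines. Proposition~\ref{pro:pullback SG} then reduces the problem to computing $g(\PU_4\to\PU_4/K)$, a principal bundle over a \emph{compact} $15$-manifold, where the classifying map $cl:\PU_4/K\to BK$ admits a cohomological analysis via the Eilenberg--Moore spectral sequence (Proposition~\ref{pro:PUn/Gamma}).

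\textbf{Wrong primes and wrong classes.} You propose $(p,n)=(2,4)$ for lines and $(3,3)$ for quartics, invoking ``exceptional $p$-torsion generators'' and ``Brauer-type classes'' of $H^*(B\PU_n;\F_p)$. The paper uses exactly the opposite primes: $p=3$ for $n=4$ (lines) and $p=2$ for $n=3$ (bitangents, flexes). These primes are forced by the finite abelian subgroups $K\cong(\Z/3)^3$ and $H\cong(\Z/2)^2$ of automorphisms chosen for the Fermat cubic and the Klein quartic, and they satisfy $p\nmid n$, which is essential: in that regime $H^*(B\PU_n;\F_p)\cong H^*(BSU_n;\F_p)$ is a polynomial algebra on generators $\epsilon_4,\ldots,\epsilon_{2n}$ (Propositions~\ref{pro:n=3generators}, \ref{pro:n=4generators}), and these polynomial generators — not any torsion or Brauer classes — are what restrict to a regular sequence in $H^*(B\Gamma;\F_p)$ and drive the computation. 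Your instinct that ``the richer $2$-torsion structure of $B\PU_4$'' is relevant is precisely backwards: $2$-torsion in $H^*(B\PU_4)$ arises because $2\mid 4$, but that is the prime the argument must \emph{avoid}.

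\textbf{Two further gaps.} First, the claimed mechanism for why your classes lie in $\ker(H^*(X)\to H^*(Y))$ — that ``the monodromy group acts non-trivially on them, so the ordering of solutions trivializes the associated obstructions'' — is not an argument; the monodromy group permutes fibers of $Y\to X$, it does not act on $H^*(B\PU_n)$. In the paper's argument, the relevant classes are pulled back from $B\Gamma$ for the finite deck group $\Gamma$, and the content of Proposition~\ref{pro:homo genus} (homological genus) is precisely that nonvanishing of $cl^*$ in high degree bounds the genus from below; no separate vanishing-on-$Y$ argument is needed. Second, the problems $\linepro$, $\bitpro$, $\flexpro$ ask for $\epsilon$-approximate solutions, so an algorithm yields only approximate sections of the cover; converting these to exact sections (and hence to a genus bound) requires a uniform positive lower bound on the separation of solutions, which the paper obtains by compactness of the orbit $\PU_4/K$ (the quantity $\epsilon_0$ in the proof of Theorem~\ref{TC main theorems}). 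Over the noncompact $\Bl$ no such uniform $\epsilon_0$ exists, so your framework as stated does not produce the genus inequality.
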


The lower bounds in Theorem \ref{TC main theorems} come from the following theorems about the topology of various parameter spaces associated to the three enumerative problems. First, consider 
\begin{align*}
    \Bl&:=\{\text{nonsingular homogeneous cubic polynomials $F(x,y,z,w)$}\}/\C^\times\\
    \El&:=\{(F,l_1,\cdots, l_{27}) :  \text{ $l_i$'s are the 27 lines on the cubic surface $F\in \Bl$}\}.
\end{align*}
The space $\Bl$ is an open submanifold of $\CP^{19}$ consisting of all homogeneous cubic polynomials in four variables. The projection $\El\to \Bl$ given by $(F,l_1,\cdots, l_{27})\mapsto F$ is a normal $S_{27}$-cover, where $S_{27}$ acts on $\El$ by permuting the ordering of the 27 lines.
\begin{defn}[\cite{Schwarz}]
The \emph{Schwarz genus} of a covering $E\to B$, denoted by $g(E\to B)$ or simply $g(E)$, is the minimum size of an open cover of $B$ consisting of open sets such that there exists a continuous section of the covering map $E\to B$ over each open set. 
\end{defn}
\begin{thm}
\label{SG line}
    $g(\El\to \Bl)\ge 16$.
\end{thm}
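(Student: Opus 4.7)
The plan is to establish $g(\El \to \Bl) \geq 16$ by applying the classical cohomological cup-length lower bound for Schwarz genus: it suffices to exhibit $15$ mod-$p$ cohomology classes $\alpha_1, \dots, \alpha_{15} \in H^{>0}(\Bl; \F_p)$, for a suitable prime $p$, such that each $\alpha_i$ lies in the kernel of the pullback $\pi^\ast \colon H^\ast(\Bl; \F_p) \to H^\ast(\El; \F_p)$ and the cup product $\alpha_1 \smile \cdots \smile \alpha_{15}$ is nonzero in $H^\ast(\Bl; \F_p)$. Then Schwarz's inequality forces $g(\El \to \Bl) \geq 16$.

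The natural supply of such ``Schwarz genus classes'' comes from the classifying map $f \colon \Bl \to BS_{27}$ of the cover: every element of $f^\ast H^{>0}(BS_{27}; \F_p)$ is annihilated by $\pi^\ast$, because the composite $\El \to \Bl \to BS_{27}$ factors through the contractible $ES_{27}$, and the kernel of $\pi^\ast$ is an ideal in $H^\ast(\Bl; \F_p)$. Since by Harris's classical theorem the monodromy of the $27$-line cover is the Weyl group $W(E_6) \subset S_{27}$, the map $f$ factors as $\Bl \to BW(E_6) \to BS_{27}$, so additional Schwarz genus classes become available by pullback from the generally larger ring $H^\ast(BW(E_6); \F_p)$.

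To verify non-vanishing of a suitable 15-fold cup product, I would leverage the $\PGL_4 \simeq \PU_4$-action on $\Bl$ (coming from change of coordinates on $\CP^3$), which is compatible with the $S_{27}$-cover in the obvious way. The Borel construction then produces a fibration $\Bl \to \Bl_{h\PU_4} \to B\PU_4$ together with a parallel $S_{27}$-cover $\El_{h\PU_4} \to \Bl_{h\PU_4}$ classified by a map $\Bl_{h\PU_4} \to BS_{27}$. Classes pulled back from $BS_{27}$ and from $B\PU_4$ therefore coexist in $H^\ast(\Bl_{h\PU_4}; \F_p)$, and the Serre spectral sequence of the $\PU_4$-fibration relates the two families. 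The idea is to select $15$ classes in the image of $H^\ast(BS_{27}; \F_p)$ whose product in $H^\ast(\Bl_{h\PU_4}; \F_p)$ can be computed, modulo the ideal generated by $B\PU_4$-classes, using the detailed mod-$p$ structure of $H^\ast(B\PU_4; \F_p)$ worked out by the second author and collaborators---in particular, the exotic torsion generators beyond the reduced Chern classes $c_2, c_3, c_4$---and in such a way that the resulting element restricts along the fiber inclusion to a nonzero element of $H^\ast(\Bl; \F_p)$.

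The hardest step will be the explicit identification of these 15 classes and the verification that no differentials in the Serre spectral sequence, nor any unforeseen cancellations, annihilate their cup product. It requires both a precise presentation of $H^\ast(B\PU_4; \F_p)$ and careful tracking of the edge morphisms connecting $H^\ast(\Bl_{h\PU_4}; \F_p)$ to $H^\ast(\Bl; \F_p)$. The disparity between the bound $16$ for $\linepro$ and the much smaller bounds for the quartic problems reflects both the higher complex dimension of $\Bl$ (namely $19$) and the strictly richer mod-$p$ cohomology of $B\PU_4$ compared to $B\PU_3$.
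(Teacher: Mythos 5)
Your guiding philosophy---produce nonzero cup products of classes killed on the total space, using Schwarz's cup-length criterion---is the right family of ideas, and you correctly sense that the $\PU_4$-action must be exploited. But the plan as stated has a genuine gap, and it is not the gap you identify as ``the hardest step.''

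The missing move is a \emph{reduction of spaces, not just of classes}. You propose to locate Schwarz-genus classes coming from $BS_{27}$ (or $BW(E_6)$) and to verify the non-vanishing of a $15$-fold product inside $H^*(\Bl;\F_p)$, or inside $H^*(\Bl_{h\PU_4};\F_p)$ followed by restriction to the fiber. Both target rings are essentially uncomputed: $\Bl$ is the complement of the discriminant in $\CP^{19}$, and $\Bl_{h\PU_4}$ is even less accessible. There is no spectral-sequence bookkeeping that will make a $15$-fold product computable there without some further geometric input. The paper's actual argument supplies exactly that input: one restricts the $S_{27}$-cover along the orbit map $\eta\colon \PU_4/K\to\Bl$ of the Fermat cubic surface, where $K\cong(\Z/3)^3$ is a finite subgroup of $\Aut(F)\subset\PU_4$ acting faithfully on the $27$ lines. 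Then $g(\El\to\Bl)\ge g(\eta^*\El\to\PU_4/K)$, the pullback decomposes into components each homeomorphic to $\PU_4\to\PU_4/K$, so the problem becomes computing $g(\PU_4\to\PU_4/K)$ for a principal $K$-bundle over a closed $15$-manifold. This is attacked via Schwarz's homological genus bound for the classifying map $cl\colon\PU_4/K\to BK$, and the non-vanishing of $cl^*$ in top degree $15$ is established by an Eilenberg--Moore spectral sequence argument, which reduces to checking that $(\varphi^*\epsilon_4,\varphi^*\epsilon_6,\varphi^*\epsilon_8)$ is a regular sequence in $\F_3[\xi_1,\xi_2,\xi_3]\subset H^*(BK;\F_3)$. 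Your plan never makes contact with a tractable finite-dimensional model of this kind, and the honest way to make your product computation succeed would be to reinvent this orbit reduction.

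A secondary issue: the classes you would need are not, in fact, images of classes from $BS_{27}$. Because $K$ is small inside $S_{27}$, the restriction $H^*(BS_{27};\F_3)\to H^*(BK;\F_3)$ is far from surjective, and there is no reason to expect it to hit the monomials needed for a nonzero product in $H^{15}(\PU_4/K;\F_3)$. The paper circumvents this by applying the homological genus bound directly to the principal $K$-bundle $\PU_4\to\PU_4/K$ (after discarding the extra components by Proposition \ref{pro:disconnected covers}), so that the relevant classifying space is the much richer $BK$, not $BS_{27}$ or $BW(E_6)$. Finally, the remark about ``exotic torsion generators'' in $H^*(B\PU_4;\F_p)$ points you at the wrong prime: the paper works at $p=3$, coprime to $n=4$, where $H^*(B\PU_4;\F_3)\cong\F_3[\epsilon_4,\epsilon_6,\epsilon_8]$ is a clean polynomial ring with no exotic classes. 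The torsion phenomena you allude to arise at $p=2$, which is deliberately avoided here.
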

Similarly, consider 
\begin{align*}
    \Bb&:=\{\text{nonsingular homogeneous quartic polynomials $F(x,y,z)$}\}/\C^\times\\
    \Eb&:=\{(F,l_1,\cdots, l_{28}) :  \text{ $l_i$'s are the 28 bitangent lines to the curve $F\in \Bb$}\} 
\end{align*}
The space $\Bb$ is an open submanifold of $\CP^{14}$ consisting of all homogeneous quartic polynomials in three variables. The projection $\Eb\to \Bb$ gives a normal $S_{28}$-cover.

\begin{thm}
\label{SG bitangent}
    $g(\Eb\to \Bb)\ge 9$.
\end{thm}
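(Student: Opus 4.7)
Proof plan. The strategy is Schwarz's cohomological lower bound for sectional category: for the principal $S_{28}$-bundle $\pi \colon \Eb \to \Bb$,
\[ g(\pi) \;\ge\; 1 \;+\; \operatorname{cl}\bigl(\ker(\pi^\ast \colon H^\ast(\Bb;\F_p) \to H^\ast(\Eb;\F_p))\bigr), \]
where $\operatorname{cl}$ denotes cup-length. It suffices, for some prime $p$, to exhibit eight classes in $\ker\pi^\ast$ with nonzero $8$-fold cup-product. Let $\psi \colon \Bb \to BS_{28}$ classify the cover. Because the composition $\Eb \to \Bb \to BS_{28}$ factors through the contractible $ES_{28}$, every class in $\psi^\ast H^{>0}(BS_{28};\F_p)$ lies in $\ker\pi^\ast$; thus the task reduces to exhibiting eight classes of this image with nonzero product.

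To access this image I would exploit the ambient action of $\PU_3 \simeq \PGL_3(\C)$ on $\CP^2$, which preserves smooth quartics and sends bitangents to bitangents, and therefore lifts to commuting $\PU_3$- and $S_{28}$-actions on $\Eb$ covering the given action on $\Bb$. Writing $\widetilde{\Bb} := E\PU_3 \times_{\PU_3} \Bb$ and $\widetilde{\Eb} := E\PU_3 \times_{\PU_3} \Eb$ for the Borel constructions, the classifying map $\psi$ factors as
\[ \Bb \xrightarrow{\;\iota\;} \widetilde{\Bb} \xrightarrow{\;\widetilde{\psi}\;} BS_{28}, \]
where $\iota$ is the fiber inclusion of the Borel fibration $\widetilde{\Bb} \to B\PU_3$. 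Since a generic smooth quartic has trivial projective stabilizer, $\widetilde{\Bb}$ is homotopy equivalent to the moduli space $\Bb/\PU_3$, and the Borel projection furnishes a ring map $H^\ast(B\PU_3;\F_p) \to H^\ast(\widetilde{\Bb};\F_p)$.

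The cohomological core is then to invoke the detailed structure of $H^\ast(B\PU_3;\F_p)$ developed in the second author's prior work cited in the introduction: for an appropriate prime $p$, this algebra contains the mod-$p$ Chern-class generators $c_2, c_3$ together with torsion classes produced via Bocksteins of the obstruction class $\epl \in H^3(B\PU_3;\Z)$. The goal is to identify eight classes $\omega_1, \ldots, \omega_8 \in H^\ast(BS_{28};\F_p)$ whose images under $\widetilde{\psi}^\ast$ can be matched, via the interpretation of the $28$ bitangents as a $\PU_3$-equivariant configuration of lines in the dual plane $(\CP^2)^\vee$, with specific polynomials in the $B\PU_3$-classes pulled up along $\widetilde{\Bb} \to B\PU_3$: transfer and symmetric-function classes on $BS_{28}$ correspond naturally to equivariant Chern classes of the tautological line bundle on $(\CP^2)^\vee$ evaluated at the bitangent configuration.

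The main obstacle will be the concluding cup-length verification: showing that the chosen $8$-fold product in $H^\ast(\widetilde{\Bb};\F_p)$ restricts via $\iota^\ast$ to a nonzero class in $H^\ast(\Bb;\F_p)$, or equivalently, that the product does not lie in the ideal generated by the pullback of $H^{>0}(B\PU_3;\F_p)$ (which dies under the fiber restriction $\iota^\ast$). This hinges on a careful analysis of the Serre spectral sequence for $\Bb \to \widetilde{\Bb} \to B\PU_3$ and the multiplicative structure of $H^\ast(B\PU_3;\F_p)$; that the resulting bound is $8$ rather than the $15$ of Theorem~\ref{SG line} should reflect the rank of $\PU_3$ relative to $\PU_4$ and the cup-length available in $H^\ast(B\PU_3;\F_p)$ versus $H^\ast(B\PU_4;\F_p)$.
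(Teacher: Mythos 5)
Your proposal correctly identifies that the bound should come from Schwarz's cohomological machinery and that $\PU_3$-equivariance is central, but it does not amount to a proof, and its central reduction contains an error.

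The key flaw is the claim that $\widetilde{\Bb}:=E\PU_3\times_{\PU_3}\Bb$ is homotopy equivalent to $\Bb/\PU_3$ ``since a generic smooth quartic has trivial projective stabilizer.'' The Borel construction agrees with the honest quotient (up to homotopy, in the relevant coefficients) only when the action is free; here it is not, and the exceptional orbits are precisely the ones carrying all the cohomological information. The Klein quartic has stabilizer $G_{168}\cong\mathrm{PSL}_2(\F_7)$ of order $168$, and with $\F_2$-coefficients (the coefficients that actually produce the bound) this stabilizer contributes nontrivially. So the passage from $\widetilde{\Bb}$ to $\Bb/\PU_3$ cannot be made, and the subsequent factorization argument does not go through. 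Moreover, the ``cohomological core'' is left entirely unspecified: no explicit classes $\omega_1,\ldots,\omega_8\in H^*(BS_{28};\F_p)$ are produced, no prime $p$ is fixed, and the final step is conceded as ``the main obstacle'' without being carried out. As written, this is a plan with a gap, not a proof.

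The paper's actual route is simpler and more local, and it is worth contrasting. Rather than computing globally on $\Bb$ (or a Borel construction over it), one restricts to a \emph{single} $\PU_3$-orbit: the orbit of the Klein quartic $F$, which is $\PU_3/\Aut(F)$. Inside $\Aut(F)\cong G_{168}$ one picks the elementary abelian $2$-group $H\cong(\Z/2)^2$ of diagonal sign matrices, giving an orbit map $\theta\colon\PU_3/H\to\Bb$. Pulling back $\Eb\to\Bb$ along $\theta$ produces a disjoint union of copies of $\PU_3$ (using that $\Aut(F)$ acts faithfully on the $28$ bitangents), so by Propositions~\ref{pro:pullback SG} and~\ref{pro:disconnected covers}, $g(\Eb\to\Bb)\ge g(\PU_3\to\PU_3/H)$. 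One then invokes Proposition~\ref{pro:homo genus} (the homological genus bound attached to the classifying map $cl\colon\PU_3/H\to BH$), \emph{not} the cup-length bound you cite, and proves $cl^*$ is surjective --- hence nonzero in top degree $8$ --- by running the Eilenberg--Moore spectral sequence for $\PU_3/H\to BH\to B\PU_3$ and verifying that $\varphi^*(\epsilon_4),\varphi^*(\epsilon_6)$ form a regular sequence in $H^*(BH;\F_2)\cong\F_2[u,v]$. This yields $g(\PU_3\to\PU_3/H)=9$ exactly. Your heuristic that the bound $8$ versus $15$ reflects the rank of $\PU_3$ versus $\PU_4$ is essentially right: the count $2n$ in $g\ge 2n+1$ traces back to the number of polynomial generators $\epsilon_4,\ldots,\epsilon_{2n}$ of $H^*(B\PU_n;\F_p)$, equivalently to $\dim\PU_n/\Gamma$ for the chosen elementary abelian $\Gamma$.
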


A generic quartic curve has 24 inflection points. However, unlike in the previous examples, being smooth is not enough for a quartic curve to have 24 distinct inflection points because two inflection points can coincide. Hence, we consider
\begin{align*}    
    \Bf&:=\{F(x,y,z) : F \text{ defines a quartic curve with 24 distinct inflection points}\}/\C^\times\\
    \Ef&:=\{(F,p_1,\cdots, p_{24}) :  \text{ $p_i$'s are the 24 distinct inflection points on $F\in \Bf$}\}.    
\end{align*}
$\Bf$ is an open submanifold of $\CP^{14}.$ The  projection $\Ef\to\Bf$ gives a normal $S_{24}$-cover. 
\begin{thm}
\label{SG flex}
    $g(\Ef\to \Bf)\ge 9 $.
\end{thm}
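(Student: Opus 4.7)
My plan is to prove $g(\Ef \to \Bf) \geq 9$ via the cohomological lower bound on the Schwarz genus \cite{Schwarz}: if there exist $8$ classes $\alpha_1, \ldots, \alpha_8 \in H^*(\Bf; R)$ (for some coefficient ring $R$) lying in $\ker p^*$ with nonzero cup product $\alpha_1 \smile \cdots \smile \alpha_8 \neq 0$, then $g(\Ef \to \Bf) \geq 9$. My task is therefore to exhibit $8$ such classes.

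The classifying map $\phi_S \colon \Bf \to BS_{24}$ of the $S_{24}$-cover yields a ring homomorphism $\phi_S^* \colon H^*(BS_{24}; R) \to H^*(\Bf; R)$, and any class in $\phi_S^*\bigl(H^{>0}(BS_{24}; R)\bigr)$ automatically lies in $\ker p^*$. To detect classes with nonzero cup product, I would exploit the $\PU_3$-equivariance of the cover: the group $\PGL_3(\C) \simeq \PU_3$ acts on both $\Bf$ and $\Ef$ (by projective change of coordinates and by transporting the flexes, respectively), and these actions commute with the $S_{24}$-deck transformations. Consequently $\phi_S$ is $\PU_3$-invariant and factors as $\phi_S = \phi_{hh} \circ \iota$, where $\iota \colon \Bf \hookrightarrow E\PU_3 \times_{\PU_3} \Bf$ is the fiber inclusion and $\phi_{hh} \colon E\PU_3 \times_{\PU_3} \Bf \to BS_{24}$ is the equivariantly lifted classifying map. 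This reduces the problem to studying the equivariant cohomology $H^*_{\PU_3}(\Bf; R) = H^*(E\PU_3 \times_{\PU_3} \Bf; R)$, which is accessible via the Leray--Serre spectral sequence
\[
E_2^{p,q} = H^p(B\PU_3; H^q(\Bf; R)) \;\Longrightarrow\; H^{p+q}(E\PU_3 \times_{\PU_3} \Bf; R)
\]
of the Borel fibration $\Bf \to E\PU_3 \times_{\PU_3} \Bf \to B\PU_3$.

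The $8$ classes arise by selecting elements of $\phi_{hh}^*\bigl(H^*(BS_{24}; \F_3)\bigr)$ whose restriction to the fiber $H^*(\Bf; \F_3) = E_\infty^{0,*}$ is nontrivial and whose cup product survives. Here one exploits the Kono--Mimura description of $H^*(B\PU_3; \F_3)$ with its polynomial and exterior generators (associated to the Brauer class in $H^3(B\PU_3; \Z)$ and the fibration $BU(1) \to BU(3) \to B\PU_3$) to understand the $E_2$-page and the differentials. The cup product calculation in $H^*(\Bf; \F_3)$ is facilitated by comparison with $H^*(\CP^{14}; \F_3)$ and $H^*_{\PU_3}(\CP^{14}; \F_3)$, using that $\Bf \subset \CP^{14}$ is the complement of a proper algebraic subvariety (the inflection discriminant).

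The main obstacle is this last step: rigorously verifying that the chosen $8$-fold cup product is nonzero in $H^*(\Bf; \F_3)$. This requires delicate control of the Serre spectral sequence differentials for the Borel fibration, detailed knowledge of $H^*(B\PU_3; \F_3)$, and an understanding of the codimension and topology of the inflection discriminant in $\CP^{14}$. This step parallels the arguments for Theorems \ref{SG line} and \ref{SG bitangent} and lies at the heart of the paper's main technical contribution.
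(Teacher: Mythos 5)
Your general principle — find positive-degree classes in $\ker p^*$ with nonzero $8$-fold cup product — is the right kind of lower bound, but your plan leaves the crucial step unproven and the route you sketch would not close the gap. There are three concrete problems.

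First, working directly in $H^*(\Bf;\F_3)$ is intractable. $\Bf$ is the complement of the flex discriminant in $\CP^{14}$; neither its additive cohomology nor its ring structure is known, and there is no Vassiliev-type computation available to supply the $8$-fold cup product you need. You correctly identify this as ``the main obstacle,'' but offer no way around it — which means the proof is not complete.

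Second, the coefficient prime is wrong for the argument to work. You reach for $\F_3$ and Kono--Mimura's description of $H^*(B\PU_3;\F_3)$, but $p=3$ is the torsion prime for $\PU_3$, where the answer has exterior generators and is much more complicated. What actually makes the computation feasible is $\F_2$, where $H^*(B\PU_3;\F_2)\cong\F_2[\epsilon_4,\epsilon_6]$ is a polynomial algebra on two generators (Proposition~\ref{pro:n=3generators}, valid for all $p\neq 3$). This choice of prime is dictated by the third missing ingredient.

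Third, and most importantly, you are missing the key reduction that makes the problem computable: restrict $\Ef\to\Bf$ to the $\PU_3$-orbit of the Klein quartic. The stabilizer contains a subgroup $H\cong(\Z/2)^2$ that acts faithfully on the $24$ flexes, so the orbit map $\theta:\PU_3/H\to\Bf$ pulls the $S_{24}$-cover back to a disjoint union of copies of $\PU_3$, each the principal $H$-bundle $\PU_3\to\PU_3/H$; by Propositions~\ref{pro:pullback SG} and \ref{pro:disconnected covers} this gives $g(\Ef\to\Bf)\geq g(\PU_3\to\PU_3/H)$. Now $\PU_3/H$ is a closed $8$-manifold, and the genus of the $H$-cover is handled via the fibration $\PU_3/H\to BH\to B\PU_3$ and its Eilenberg--Moore spectral sequence (not the Serre spectral sequence of the Borel fibration you propose): one shows $\varphi^*(\epsilon_4),\varphi^*(\epsilon_6)$ form a regular sequence in $H^*(BH;\F_2)\cong\F_2[u,v]$, so $cl^*:H^*(BH;\F_2)\to H^*(\PU_3/H;\F_2)$ is surjective, in particular nonzero in top degree $8$, and Proposition~\ref{pro:homo genus} gives $g\geq 9$. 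This \emph{is} a cup-length argument (the image of $cl^*$ is an ideal generated in degree $1$ with nonzero $8$-fold products, and it lies in $\ker p^*$), but it happens in the manageable space $\PU_3/H$, not in $\Bf$ itself. Without this pullback, the Borel-construction factorization you set up does not lead anywhere computable.
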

Let us briefly outline the proof of Theorems \ref{SG bitangent} and \ref{SG flex}. The parameter space of quartic curves admits a natural action of the projective unitary group $\PU_3=\U_3/Z(\U_3)$. The lower bound in Theorem \ref{SG flex} is obtained by restricting the cover $\Ef\to\Bf$ to a subspace of $\Bf$ given by the $\PU_3$-orbit of the the Klein quartic curve defined by  $x^3y+y^3z+z^3x=0$. This orbit is homeomorphic to the homogeneous space $\PU_3/\Aut(F)$ where $\Aut(F)$ is the automorphism group of the Klein quartic curve $F$. The most difficult part of this proof is to find nonzero obstruction classes in the cohomology of this homogeneous space using results about the cohomology of the classifying spaces of $\PU_3$. Similarly, to prove Theorems \ref{SG line}, we consider the Fermat cubic surface $x^3+y^3+z^3+w^3=0$ and its $\PU_4$-orbit. This proof is rather flexible and can be modified to study the topological complexity of enumerative problems other than the three we considered in this paper.

Can the inequalities in the theorems above be equalities? We don't expect so. For example, $\Bl$ is an open submanifold of $\CP^{19}$ and hence by the Andreotti–Frankel theorem is homotopy equivalent to a CW complex of dimension 19. For dimension reasons, we have that $g(\El\to \Bl)\le 20$ by Corollary \ref{cor:coho dim} below. Similarly, we can show  that  $g(\El\to \Bl)\le 15$ and $g(\Ef\to\Bf)\le 15$. In general, it is a difficult problem to determine the exact value of Schwarz genus. Even for the most well-studied example, the Schwarz genus of the $S_n$-cover of the configuration space of $n$ distinct unordered points in $\C$ has various upper and lower bounds thanks to the works of Smale \cite{Smale}, Vassiliev \cite{Vassiliev}, De Concini-Procesi-Salvetti \cite{DPS}, and Arone \cite{Arone}, but its exact value remains unknown for infinitely many integers $n$. The main point of this paper is to broaden the domain of study of topological complexity and Schwarz genus, from finding roots of polynomials to finding solutions of enumerative problems about curves and surfaces. We leave the problem of improving the bounds to future works.

The cohomology of the classifying space of $\PU_n$, which is the main technical tool in this paper, is not yet fully understood despite a half century of research. Kono-Mimura \cite{kono1975cohomology}, Toda \cite{toda1987cohomology}, Vezzosi \cite{vezzosi2000chow}, and Vistoli \cite{vistoli2007cohomology} have made significant contributions.    
More recently, the second author \cite{gu2019cohomology} determines the cohomology groups $H^k(B\PU_n;\Z)$ for $k\leq 10$, which is a key ingredient of proofs in this paper. In contrast to the cohomology classes of $B\U_n$, \emph{i.e.} the universal Chern classes, which have rich geometric interpretations and wide applications to geometric topology and algebraic geometry, the cohomology classes of $B\PU_n$ do not have as many geometric interpretations and applications. For this reason, we believe that our results would be an interesting addition to the literature about the cohomology of $B\PU_n$ which has been active in recent years. 

\section*{Acknowledgment}
We thank Yichang Cai, Lifan Guan, and Yingjie Tan for helpful conversations. WC is partially supported by the Young Scientists Fund of the National Natural Science Foundation of China (Grant No. 12101349). XG is partially supported by the Young Scientists Fund of the National Natural Science Foundation of China (Grant No.12201503).

\section{Preliminary results}
We first recall and prove some general results that will be used later in our proofs. 

\subsection{General results about Schwarz genus}
We recall some general results about Schwarz genus, mostly due to A. Schwarz \cite{Schwarz}. We assume all spaces to be normal and Hausdorff. We also assume each open cover to have a partition of unity subordinated to it. This assumption holds if the space is paracompact. 

\begin{prop}[\cite{Schwarz}, p.71]\label{pro:pullback SG}
Let $i^*E\to B'$ denote the pullback of a cover $E\to B$ along a continuous map $i: B'\to B$. Then $g(i^*E\to B')\le g(E\to B)$.
\end{prop}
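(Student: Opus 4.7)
The plan is to argue directly from the definitions, using the explicit model of the pullback as a fibered product. Let $n=g(E\to B)$. By definition, there exist open sets $U_{1},\ldots,U_{n}$ covering $B$ together with continuous sections $s_{j}\colon U_{j}\to E$ of the covering map $p\colon E\to B$. I would then set $V_{j}:=i^{-1}(U_{j})$; since $i$ is continuous, the $V_{j}$ are open, and they cover $B'$ because the $U_{j}$ cover $B$.

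Next I would realize the pullback concretely as
\[
  i^{*}E=\{(b',e)\in B'\times E:i(b')=p(e)\},
\]
so that the covering projection $i^{*}E\to B'$ is the restriction of the projection onto the first factor. For each $j$ I would define
\[
  s_{j}'\colon V_{j}\longrightarrow i^{*}E,\qquad s_{j}'(b'):=\bigl(b',\,s_{j}(i(b'))\bigr).
\]
The assignment is well defined because $b'\in V_{j}$ implies $i(b')\in U_{j}$, so $s_{j}(i(b'))$ makes sense and satisfies $p(s_{j}(i(b')))=i(b')$. It is continuous as a composition of continuous maps into a subspace of $B'\times E$, and by construction it is a section of $i^{*}E\to B'$. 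Hence $\{V_{j}\}_{j=1}^{n}$ is an open cover of $B'$ of size $n$ admitting local sections, which yields $g(i^{*}E\to B')\le n=g(E\to B)$.

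There is no real obstacle: the statement is essentially tautological once the pullback is written in its fibered-product form. The only thing to check carefully is that we are genuinely in the setting in which Schwarz genus is defined, i.e.\ that $B'$ is normal/Hausdorff and paracompact enough to admit partitions of unity subordinate to $\{V_{j}\}$; but this is part of the standing hypotheses stated just before the proposition, so no further work is needed.
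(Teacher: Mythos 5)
Your argument is correct and is the standard one: pull back the open cover $\{U_j\}$ realizing $g(E\to B)$ along $i$, and pull back the sections via the fibered-product description of $i^*E$. The paper states this proposition with a citation to Schwarz and gives no proof of its own, so there is nothing to compare against; your write-up fills that gap cleanly, and the closing remark about the standing paracompactness/normality hypotheses is exactly the right thing to flag.
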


\begin{prop}[\cite{Schwarz}, p.71]\label{pro:fiberwise join}
Consider a covering 
$E\to B$ with fiber $F$. Let $$F^{*k}\to E_k\to B$$ denote the fiberwise join bundle where $F^{*k}$ is the topological join of $F$ with itself $k$ times.
Then $g(E\to B)\le k$ if and only if $E_k\to B$ has a continuous section.
\end{prop}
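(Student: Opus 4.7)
The plan is to exploit the coordinate description of the fiberwise join directly. Recall that a point in the $k$-fold join $F^{*k}$ can be written as a formal convex combination $\sum_{i=1}^{k} t_i x_i$ with $t_i\ge 0$, $\sum_i t_i = 1$, and $x_i \in F$, subject to the identification that whenever $t_i = 0$ the corresponding $x_i$ is irrelevant. Fiberwise over $B$, the space $E_k$ is exactly the space of such formal combinations where each $x_i$ lies in the fiber $F_b$ of $E\to B$ over $b$. The continuous functions $t_i: E_k\to [0,1]$ are globally defined, and on the open set $\{t_i>0\}\subset E_k$ the $i$-th ``point coordinate'' $x_i$ is unambiguously defined and continuous.

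For the forward direction, suppose $g(E\to B)\le k$, so there exist open sets $U_1,\ldots,U_k$ covering $B$ together with continuous sections $s_i: U_i\to E$. By the standing hypothesis on open covers, choose a partition of unity $\{\phi_i\}_{i=1}^k$ subordinate to $\{U_i\}$. I would then define
\[
\sigma: B\longrightarrow E_k,\qquad \sigma(b) \;=\; \sum_{i=1}^{k} \phi_i(b)\, s_i(b),
\]
interpreted as a point of the fibre $F_b^{*k}$. This formula is well-defined even though $s_i(b)$ need not exist when $b\notin U_i$, because $\phi_i(b)=0$ on the complement of $\operatorname{supp}(\phi_i)\subset U_i$ and the join relation renders the $x_i$-coordinate immaterial in that case. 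The verification of continuity reduces to checking it locally on a neighbourhood of each point, where only finitely many $\phi_i$'s are nonzero and the corresponding $s_i$'s are available.

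For the backward direction, suppose a section $\sigma: B\to E_k$ exists. I would pull back the coordinates: set $t_i(b) := t_i(\sigma(b))$ and, whenever $t_i(b)>0$, set $x_i(b) := x_i(\sigma(b))\in F_b$. Define
\[
U_i \;:=\; \{\, b\in B : t_i(b)>0\,\},\qquad s_i := x_i|_{U_i}: U_i \longrightarrow E.
\]
Each $U_i$ is open by continuity of $t_i\circ\sigma$, each $s_i$ is a continuous section by continuity of $x_i$ on $\{t_i>0\}\subset E_k$, and the $U_i$'s cover $B$ because $\sum_i t_i(b) = 1$ forces at least one $t_i(b)$ to be positive. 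Hence $g(E\to B)\le k$.

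The main conceptual point — and the only place where one must proceed carefully — is the continuity statement for the ``point coordinate'' $x_i$ on the open subset $\{t_i>0\}$ of the join; this is a standard fact about the topology on $F^{*k}$ (and its fiberwise version over $B$), but it is the hinge on which both directions of the argument turn. Once this is in place, both implications are essentially formal manipulations with partitions of unity.
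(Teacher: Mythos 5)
The paper does not prove this proposition; it cites it directly from Schwarz (p.\,71) as a known result. Your argument reconstructs exactly the standard proof one finds there: partition of unity gives the formal convex combination $\sum_i \phi_i(b)\,s_i(b)$ for the forward direction, and reading off the barycentric coordinates $t_i$ and point coordinates $x_i$ on the open sets $\{t_i>0\}$ gives the converse. You correctly invoke the paper's standing assumption that every open cover admits a subordinate partition of unity, and you correctly flag the one genuine technical point, namely that $x_i$ is continuous on $\{t_i>0\}$ in the (fiberwise) join topology. This is the same route as the cited source, so there is nothing to add or correct.
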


\begin{cor}[\cite{Schwarz}, p.76]
\label{cor:coho dim}
If $B$ is a CW complex of dimension $d$, then $g(E\to B)\le d+1.$
\end{cor}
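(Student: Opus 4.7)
The plan is to combine Proposition~\ref{pro:fiberwise join} with standard obstruction theory on the CW structure of $B$. By Proposition~\ref{pro:fiberwise join}, the inequality $g(E\to B)\le d+1$ is equivalent to the existence of a continuous section of the fiberwise join bundle $E_{d+1}\to B$, whose fiber is the $(d+1)$-fold topological join $F^{*(d+1)}$ of the fiber $F$ of the original cover. So the goal is to produce such a section.

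The first step will be to record the connectivity of the fiber. Since $F$ is a nonempty discrete set, it is $(-1)$-connected, and iterating the standard join inequality $\mathrm{conn}(X*Y)\ge \mathrm{conn}(X)+\mathrm{conn}(Y)+2$ shows that $F^{*(d+1)}$ is $(d-1)$-connected. This is the only geometric input needed from the join construction.

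The second step will be to build the section of $E_{d+1}\to B$ skeleton-by-skeleton. A section over the $0$-skeleton exists because the fiber is nonempty. The obstruction to extending a section from the $n$-skeleton $B^{(n)}$ to $B^{(n+1)}$ lies in $H^{n+1}\bigl(B;\,\pi_n(F^{*(d+1)})\bigr)$, with local coefficients coming from the $\pi_1(B)$-action on $F$. For $n\le d-1$ this vanishes because the coefficient group $\pi_n(F^{*(d+1)})$ is zero by the connectivity bound above; for $n=d$ it vanishes because $\dim B=d$ forces $H^{d+1}(B;\,\cdot\,)=0$ in any coefficient system; and for $n\ge d+1$ there are no cells to attach. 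Hence a global section of $E_{d+1}\to B$ exists, and Proposition~\ref{pro:fiberwise join} yields the desired bound.

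The only concern I foresee is a minor technical one: verifying that the fiberwise join of a cover really is a locally trivial fibration to which cellular obstruction theory applies, and checking that local coefficients do not spoil the vanishing statements. Neither is a genuine obstacle, because in every dimension the obstruction group vanishes for purely dimensional or connectivity reasons, independently of any $\pi_1(B)$-action on the fiber.
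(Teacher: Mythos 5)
Your argument is the same as the paper's: invoke Proposition~\ref{pro:fiberwise join}, observe that $F^{*(d+1)}$ is $(d-1)$-connected so the obstruction groups $H^{n+1}(B;\pi_n(F^{*(d+1)}))$ vanish for $n\le d-1$, and use $\dim B=d$ to kill the remaining obstruction in degree $d+1$. Your brief remarks about local coefficients and local triviality of the fiberwise join are a slight tightening but do not change the route.
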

\begin{proof}[Proof (Proposition \ref{pro:fiberwise join} $\Rightarrow$ Corollary \ref{cor:coho dim})]
The obstruction to extending a section of $E_{d+1}\to B$ to the $i$-skeleton of $B$ is an element in $H^{i+1}(B;\pi_i(F^{*(d+1)}))$. This cohomology group vanishes for $i\ge d$ since $B$ has cohomological dimension $d$. If $i\le d-1$, then $\pi_i(F^{*(d+1)})=0$ since $F^{*(d+1)}$ is always $(d-1)$-connected. Hence there is no obstruction to a section.
\end{proof}


\begin{prop}[\cite{Schwarz}, p.98]\label{pro:homo genus}
Suppose that $E\to B$ is a normal $\Gamma$-cover (or equivalently, a principal $\Gamma$-bundle) with a classifying map $cl:B\to B\Gamma$ where $B\gm$ is the classifying space of $\Gamma$. Then we have 
$$g(E\to B)\ge \min\{k:\H^i(B\Gamma;A)\xrightarrow{cl^*}\H^i(B;A)\text{ is zero for any }i\ge k\}$$
for any $\Gamma$-module $A$. The integer on the right hand side of the inequality above is called the \emph{homological $A$-genus} of the cover $E\to B.$
\end{prop}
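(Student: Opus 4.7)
The plan is to combine Proposition \ref{pro:fiberwise join} with a cohomological dimension count on a universal model of the fiberwise join. Setting $g := g(E\to B)$, the goal is to show that for every $i \ge g$ and every $\Gamma$-module $A$, the pullback map $cl^*\colon H^i(B\Gamma;A)\to H^i(B;A)$ vanishes; the desired inequality then follows immediately, since this forces $g$ to lie in the set on the right-hand side.

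The first step I would carry out is as follows. By Proposition \ref{pro:fiberwise join}, there is a section $s\colon B\to E_g$ of the $g$-fold fiberwise join. Since $E\to B$ is classified by $cl$, the bundle $E_g\to B$ is the pullback of the universal fiberwise join $(E\Gamma)_g\to B\Gamma$ along $cl$, and the section $s$ lifts to a map $\sigma\colon B\to (E\Gamma)_g$ with $p\circ\sigma\simeq cl$, where $p\colon (E\Gamma)_g\to B\Gamma$ is the structure map. On cohomology this yields the factorization $cl^* = \sigma^*\circ p^*$, so it suffices to prove that $p^*$ vanishes in degrees $\ge g$.

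The heart of the argument is the claim that $(E\Gamma)_g$ has the homotopy type of a CW complex of dimension at most $g-1$. In Milnor's model, $E\Gamma$ is the infinite join $\Gamma^{*\infty}$, and the finite join $\Gamma^{*g}$ is a $(g-1)$-dimensional simplicial complex carrying a free simplicial $\Gamma$-action by diagonal multiplication. Since the fiber of $E\to B$ is $\Gamma$, its $g$-fold fiberwise join has fiber $\Gamma^{*g}$, and $(E\Gamma)_g$ is modeled by the Borel construction $E\Gamma\times_\Gamma\Gamma^{*g}$. The projection $E\Gamma\times_\Gamma\Gamma^{*g}\to\Gamma^{*g}/\Gamma$ is a fibration with contractible fiber $E\Gamma$, hence a homotopy equivalence, so $(E\Gamma)_g\simeq\Gamma^{*g}/\Gamma$, a CW complex of dimension $g-1$.

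Given the dimension bound, $H^i((E\Gamma)_g;A)=0$ for every $i\ge g$ and every coefficient system $A$, so $p^*$ vanishes in those degrees and therefore so does $cl^*=\sigma^*\circ p^*$. The main technical point I expect to check carefully is the identification of the fiberwise join $E_g$ with the associated Borel-construction bundle (so that the pullback square used above is honest), together with ensuring that the vanishing argument still applies when $A$ is a nontrivial $\Gamma$-module; both are standard once Milnor's join model is in place, but they are the places where genuine care is required.
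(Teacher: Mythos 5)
The paper does not reprove Proposition \ref{pro:homo genus}; it cites it directly from Schwarz. Your argument is correct and is essentially Schwarz's original one: pull back the universal $g$-fold fiberwise join $(E\Gamma)_g\to B\Gamma$ to factor $cl^*$ through $H^*((E\Gamma)_g;A)$, identify $(E\Gamma)_g$ with $\Gamma^{*g}/\Gamma$ via the Borel construction (the section exists because the $\Gamma$-action on $\Gamma^{*g}$ is free), and invoke the vanishing of cohomology with local coefficients above the CW dimension $g-1$. The only minor wrinkle worth smoothing is that the induced map $\sigma$ satisfies $p\circ\sigma=cl$ on the nose, not merely up to homotopy, since $s$ is a genuine section of the pullback bundle.
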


\begin{prop}[Disconnected covers]
\label{pro:disconnected covers}
Consider a cover $E\to B$ with path-components $E=\bigcup_{i\in I} E_i$ where each $E_i\to B$ is also a cover. Suppose that there exists an $m\in I$ such that for any $i\in I$, there exists morphism of coverings $E_i\to E_m$. Then $g(E)=g(E_m).$
\end{prop}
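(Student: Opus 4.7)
The plan is to prove the equality by establishing the two inequalities $g(E)\le g(E_m)$ and $g(E_m)\le g(E)$ separately, each by explicitly converting local sections of one cover into local sections of the other.

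For the inequality $g(E)\le g(E_m)$, the idea is immediate: given an open cover $\{U_1,\dots, U_k\}$ of $B$ realizing $g(E_m\to B)=k$, with local sections $s_j\colon U_j\to E_m$, I would simply postcompose with the inclusion $E_m\hookrightarrow E$ (which is a morphism of coverings over $B$) to obtain local sections $U_j\to E$ over the same cover. Hence $g(E)\le g(E_m)$.

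For the reverse inequality $g(E_m)\le g(E)$, suppose $\{U_1,\dots,U_k\}$ is an open cover of $B$ with local sections $s_j\colon U_j\to E$. The first step is to note that, since the base $B$ is locally path-connected in the settings we care about, the path-components of each $U_j$ are open, so $U_j=\bigsqcup_\alpha U_j^\alpha$ is a disjoint union of open path-connected pieces. Because $s_j$ is continuous and each $U_j^\alpha$ is path-connected, $s_j(U_j^\alpha)$ lies in a single path-component $E_{i(j,\alpha)}$ of $E$. By the hypothesis there is a morphism of coverings $\phi_{i(j,\alpha)}\colon E_{i(j,\alpha)}\to E_m$ (i.e.\ a continuous map over $B$), so $\phi_{i(j,\alpha)}\circ s_j\colon U_j^\alpha\to E_m$ is a section of $E_m\to B$ on $U_j^\alpha$. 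Gluing across the disjoint open pieces $U_j^\alpha$ produces a continuous section $\tilde s_j\colon U_j\to E_m$. The cover $\{U_1,\dots,U_k\}$ therefore also realizes sections of $E_m\to B$, giving $g(E_m)\le g(E)$.

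Combining the two inequalities yields $g(E)=g(E_m)$. The only subtlety, and the place where a careful reader might pause, is the gluing step: one must verify that the path-components of each $U_j$ are open, so that the locally defined maps into $E_m$ assemble into a genuine continuous section. This is where the standing hypotheses that the spaces are (paracompact, locally path-connected) normal Hausdorff spaces enter; no other serious obstacle arises.
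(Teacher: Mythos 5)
Your proof is correct and follows essentially the same strategy as the paper: $g(E)\le g(E_m)$ comes from postcomposing sections with the inclusion $E_m\hookrightarrow E$, and $g(E_m)\le g(E)$ from postcomposing with the given morphisms $E_i\to E_m$. The paper is slightly more economical on the second inequality: rather than working section-by-section and component-by-component, it observes that the disjoint union of the morphisms $E_i\to E_m$ already defines a single continuous morphism of coverings $E\to E_m$ (continuity is automatic since the $E_i$ are open disjoint pieces of $E$, as each $E_i\to B$ is itself a cover), and then invokes once the general principle that a morphism of coverings $E\to E'$ yields $g(E')\le g(E)$; your gluing over the path-components $U_j^\alpha$ is the same fact verified locally, so no real gap, just a longer route to the same morphism.
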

\begin{proof}
In general, a morphism of coverings $E\to E'$ implies that $g(E')\le g(E)$, because any section of $E$ composes to a section of $E'.$ In our situation, we have two morphisms of covers
$$E_m\hookrightarrow E=\bigcup_{i\in I} E_i \longrightarrow E_m$$
where the first map is the natural inclusion and the second map is the disjoint union of morphisms $E_i\to E_m$ for all $i\in I$. Hence we have $g(E_m)\le g(E)\le g(E_m).$
\end{proof}

\subsection{On the cohomology of $B\PU_n$}\label{sec:BPU}
Let $f\colon B\U_n\to B\PU_n$ be the map induced by the natural projection $\U_n\to \PU_n$. 
The map $f$ is part of a fibration
\begin{equation}\label{eq:hofiber}
    B\U_n\xrightarrow{f} B\PU_n\to K(\Z,3).
\end{equation}
Let $(E, d)$ denote the associated cohomological Serre spectral sequence with coefficients in $R$ which is either $\Z$ or $\Z_{(p)}$ for a prime number $p$. We know that $H^*(B\U_n;R)\cong R[c_1,\cdots,c_n]$ where $c_i$ is the $i$-th Chern class. Let $x$ denote the canonical generator of $H^3(K(\Z,3); R)$.  Corollary 3.4 in \cite{gu2019cohomology} tells us that
\begin{equation}\label{eq:d3}
    d_3^{0,2k}(c_k) = \nabla(c_k)\otimes x
\end{equation}
where $\nabla:H^*(BU_n;R)\to H^{*-2}(BU_n;R)$ is an additive function defined by
\begin{equation*}
    \nabla(c_k) = (n-k+1)c_{k-1}
\end{equation*}
together with the Leibniz rule. Furthermore, by the proof of Theorem 1.3 in \cite{gu2019cohomology}, we have
\begin{equation}\label{eq:E4Einfty}
   \opn{Ker}d_3^{0,k} = E_4^{0,k} = E_{\infty}^{0,k} = \opn{Im}(f^*)^k
\end{equation}
for $k\leq 12$ and $R = \Z$, or equivalently, the sequence
\begin{equation}\label{eq:nablaSeq}
    H^k(BPU_n;\Z)\xrightarrow{f^*} H^k(BU_n;\Z)\xrightarrow{\nabla}H^{k-2}(BU_n;\Z)
\end{equation}
is exact for $k\leq 12$. Since $\Z_{(p)}$ is a flat $\Z$-module, the exactness of \eqref{eq:nablaSeq} is preserved when we take $R = \Z_{(p)}$. 
After routine computations of $\opn{Ker}\nabla = \opn{Im}f^*$ using the definition above, we obtain the following results.

\begin{prop}\label{pro:n=3}
    When $n = 3$ and $R = \Z$ or $R = \Z_{(p)}$ for a prime $p$, the image $\opn{Im}f^*$ has the following generators
\begin{equation}
    \begin{split}
        \opn{dim}2\colon & 0,\\
        \opn{dim}4\colon & c_1^2 - 3 c_2,\\
        \opn{dim}6\colon & 2c_1^3 - 9 c_1 c_2 + 27 c_3.\\
    \end{split}
\end{equation}
\end{prop}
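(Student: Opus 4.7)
The plan is to directly compute $\opn{Ker}\nabla$ in each of the three relevant cohomological degrees, which equals $\opn{Im} f^*$ by the exactness of the sequence \eqref{eq:nablaSeq} in degrees $\le 12$. Specializing the defining formula $\nabla(c_k) = (n-k+1)c_{k-1}$ to $n=3$ gives the values $\nabla(c_1) = 3$, $\nabla(c_2) = 2c_1$, and $\nabla(c_3) = c_2$ on generators, and the Leibniz rule then determines $\nabla$ on all monomials. The task reduces to a finite linear algebra computation in each degree, and since $R$ is either $\Z$ or $\Z_{(p)}$ we only need to check that the integrality of the stated generators matches the integrality of the kernel.

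In degree $2$, the only monomial is $c_1$, and $\nabla(c_1) = 3 \ne 0$, so the kernel is trivial. In degree $4$, I would write a general element as $a c_1^2 + b c_2$ and compute
\begin{equation*}
\nabla(a c_1^2 + b c_2) = 2a c_1\cdot \nabla(c_1) + b\nabla(c_2) = (6a + 2b)c_1,
\end{equation*}
so $\opn{Ker}\nabla$ is cut out by $6a + 2b = 0$, i.e.\ $b = -3a$, giving the single generator $c_1^2 - 3c_2$.

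In degree $6$, I would write a general element as $a c_1^3 + b c_1 c_2 + c c_3$ and apply the Leibniz rule to get
\begin{equation*}
\nabla(a c_1^3 + b c_1 c_2 + c c_3) = 9a c_1^2 + b(3c_2 + 2c_1^2) + c\,c_2 = (9a+2b)c_1^2 + (3b+c)c_2.
\end{equation*}
The kernel is cut out by $9a+2b=0$ and $3b+c=0$. Over $\Z$ (hence over $\Z_{(p)}$), since $\gcd(9,2)=1$, the first equation forces $a$ to be even; writing $a = 2k$ gives $b=-9k$ and then $c=27k$. Thus $\opn{Ker}\nabla$ is generated by $2c_1^3 - 9c_1 c_2 + 27c_3$.

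The computation is routine and the only subtlety is the integrality check in degree $6$, where one must verify that the solution with smallest positive leading coefficient actually has integer coordinates; this is immediate from the chain $a = 2k \Rightarrow b = -9k \Rightarrow c = 27k$. Since the exactness \eqref{eq:nablaSeq} identifies $\opn{Ker}\nabla$ with $\opn{Im} f^*$ in these degrees, the three kernel generators computed above are exactly the claimed generators of $\opn{Im} f^*$.
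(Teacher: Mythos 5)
Your proposal is correct and is essentially the paper's own argument: the paper states the proposition as following from ``routine computations of $\operatorname{Ker}\nabla = \operatorname{Im} f^*$'' using the formula $\nabla(c_k)=(n-k+1)c_{k-1}$ and the Leibniz rule, which is exactly what you carry out. Your values $\nabla(c_1)=3$, $\nabla(c_2)=2c_1$, $\nabla(c_3)=c_2$ and the resulting linear systems in degrees $2,4,6$ are all right.

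One small logical slip worth fixing: in degree $6$ you write ``Over $\Z$ (hence over $\Z_{(p)}$), ... the first equation forces $a$ to be even.'' That parenthetical is false for $p$ odd — over $\Z_{(3)}$, say, $2$ is a unit, so $a=1,\ b=-9/2,\ c=27/2$ is a perfectly good solution and $a$ need not be even. Your conclusion is nonetheless correct, but it needs a different justification in the localized case: either observe that $\Z_{(p)}$ is flat over $\Z$ so $\operatorname{Ker}\nabla$ localizes, and hence the $\Z$-generator $(2,-9,27)$ remains a generator over $\Z_{(p)}$; or note directly that $(2,-9,27)$ is primitive (its coordinates have $\gcd 1$), so it generates the rank-one kernel over every $\Z_{(p)}$. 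Either fix is one line, but as written the ``hence over $\Z_{(p)}$'' does not follow.
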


\begin{prop}\label{pro:n=4}
    When $n = 4$ and $R = \Z$ or $R = \Z_{(p)}$ for a prime $p$, the image $\opn{Im}f^*$ has the following generators
\begin{equation}
    \begin{split}
        \opn{dim}2\colon & 0,\\
        \opn{dim}4\colon & 3 c_1^2 - 8 c_2,\\
        \opn{dim}6\colon & c_1^3 - 4 c_1 c_2 + 8 c_3,\\
        \opn{dim}8\colon & (3 c_1^2 - 8 c_2)^2,\ 3 c_1^4 - 16 c_1^2 c_2 + 64 c_1 c_3 - 256 c_4.
    \end{split}
\end{equation}
\end{prop}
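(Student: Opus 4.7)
The plan is to invoke the exact sequence \eqref{eq:nablaSeq}, which for $k \le 12$ identifies $\opn{Im} f^*$ in degree $k$ with the kernel of $\nabla \colon H^k(B\U_4; R) \to H^{k-2}(B\U_4; R)$. Since $H^*(B\U_4; R) = R[c_1, c_2, c_3, c_4]$ and $\nabla$ is the $R$-linear derivation determined by
\[
  \nabla(c_1) = 4,\quad \nabla(c_2) = 3 c_1,\quad \nabla(c_3) = 2 c_2,\quad \nabla(c_4) = c_3,
\]
computing $\opn{Im} f^*$ reduces in each degree to explicit linear algebra: expand a general polynomial in the monomial basis of degree $k$, apply $\nabla$ via the Leibniz rule, re-expand in the monomial basis of degree $k-2$, and solve the resulting homogeneous system over $R$. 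It suffices to treat $R=\Z$, since $\Z_{(p)}$ is a flat $\Z$-module and tensoring commutes with taking kernels.

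Degrees $2$, $4$, and $6$ are short one-parameter computations. In degree $2$ the sole monomial $c_1$ satisfies $\nabla(c_1)=4\neq 0$, so the kernel is trivial. In degree $4$ the basis $\{c_1^2, c_2\}$ gives $\nabla(c_1^2) = 8 c_1$ and $\nabla(c_2) = 3 c_1$, and the single relation $8a + 3b = 0$ cuts out a one-dimensional kernel spanned by $3 c_1^2 - 8 c_2$. In degree $6$ the basis $\{c_1^3,\, c_1 c_2,\, c_3\}$ gives $\nabla(c_1^3) = 12 c_1^2$, $\nabla(c_1 c_2) = 3 c_1^2 + 4 c_2$, and $\nabla(c_3) = 2 c_2$; the two relations obtained by setting the $c_1^2$- and $c_2$-coefficients to zero leave a one-dimensional kernel generated by $c_1^3 - 4 c_1 c_2 + 8 c_3$.

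The substantive step is degree $8$. Here the five-dimensional monomial basis $\{c_1^4,\, c_1^2 c_2,\, c_2^2,\, c_1 c_3,\, c_4\}$ maps into the three-dimensional space spanned by $\{c_1^3,\, c_1 c_2,\, c_3\}$, so $\ker\nabla$ has dimension at least two. One generator comes for free: since $\nabla$ is a derivation, the square $(3 c_1^2 - 8 c_2)^2$ of the degree-$4$ generator lies in $\ker\nabla$. To obtain a second independent element I would tabulate
\[
  \nabla(c_1^4) = 16 c_1^3,\ \ \nabla(c_1^2 c_2) = 3 c_1^3 + 8 c_1 c_2,\ \ \nabla(c_2^2) = 6 c_1 c_2,\ \ \nabla(c_1 c_3) = 2 c_1 c_2 + 4 c_3,\ \ \nabla(c_4) = c_3,
\]
and solve the $3\times 5$ homogeneous system coming from the vanishing of the $c_1^3$-, $c_1 c_2$-, and $c_3$-coefficients. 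Choosing a solution with $c_2^2$-coefficient equal to zero yields $3 c_1^4 - 16 c_1^2 c_2 + 64 c_1 c_3 - 256 c_4$, which together with $(3 c_1^2 - 8 c_2)^2$ gives the two elements in the statement.

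The only real obstacle is careful bookkeeping: applying the Leibniz rule correctly to mixed monomials such as $c_1^2 c_2$ and $c_1 c_3$, tracking signs, and ensuring that all coefficients solve the system over $\Z$ (and hence over $\Z_{(p)}$) rather than merely over $\mathbb{Q}$. Once the table of $\nabla$-images on monomial generators is in hand, the remainder of the argument is a short Gaussian elimination carried out independently in each of the four relevant degrees.
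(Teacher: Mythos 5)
Your overall strategy is exactly the paper's: the authors explicitly say the proposition follows from ``routine computations of $\operatorname{Ker}\nabla = \operatorname{Im}f^*$'' using the exact sequence \eqref{eq:nablaSeq}, and you carry out precisely that linear algebra. Your tables of $\nabla$-values are correct, and degrees $2$, $4$, $6$ are handled correctly over $\Z$.

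The gap is in degree $8$, and it is a real one. You observe that the kernel is two-dimensional rationally and exhibit two linearly independent elements of it, but you never verify the integral claim --- and in fact it fails. Solving the system $16a+3b=0$, $8b+6c+2d=0$, $4d+e=0$ over $\Z$ gives the rank-two lattice with $\Z$-basis
\[
\alpha := 3 c_1^4 - 16 c_1^2 c_2 + 64 c_1 c_3 - 256 c_4, \qquad
\beta := c_2^2 - 3 c_1 c_3 + 12 c_4,
\]
with the general solution $t\alpha + c\beta$ (take $a=3t$, $b=-16t$, $c$ free, $d=64t-3c$, $e=-256t+12c$). One checks directly that $(3c_1^2 - 8c_2)^2 = 9c_1^4 - 48 c_1^2 c_2 + 64 c_2^2 = 3\alpha + 64\beta$, so the pair $\{\alpha,\ (3c_1^2-8c_2)^2\}$ spans only the sublattice $\Z\alpha + 64\Z\beta$, of index $64$ in $\ker\nabla^8 = \Z\alpha \oplus \Z\beta$. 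Thus these two elements are \emph{not} a generating set over $R=\Z$ or $R=\Z_{(2)}$; a correct second generator is $\beta = c_2^2 - 3 c_1 c_3 + 12 c_4$ (or $\alpha$ together with $\beta$ as a basis). This is in fact a defect in the proposition as stated, not just in your write-up, but you flagged the issue (``ensuring that all coefficients solve the system over $\Z$'') and then did not carry out the check, which is exactly where the argument breaks. Note that the paper's subsequent Proposition~\ref{pro:n=4generators} restricts to $p\ne 2$, where $64$ is a unit and $\{\alpha,\ 3\alpha+64\beta\}$ and $\{\alpha,\beta\}$ span the same $\F_p$-space, so none of the downstream results are affected --- but as an integral statement the degree-$8$ line needs correction, and a complete proof must actually run the Smith-normal-form (or equivalent) step over $\Z$ rather than over $\Q$.
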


\begin{lem}\label{lem:ImageModp}
    Let $p$ be a prime number not dividing $n$. Let $g\colon BSU_n\to BU_n$ be the map induced by the standard inclusion. Then we have the following commutative diagram
    \begin{equation}\label{eq:ImageModpDiagram}
        \begin{tikzcd}
            H^k(BPU_n;\Z_{(p)})\arrow[d]\arrow[r,hook,"f^*"]&H^k(BU_n;\Z_{(p)})\arrow[d]\arrow[r,two heads,"g^*"] &H^k(BSU_n;\Z_{(p)}) \arrow[d,two heads]\\
            H^k(BPU_n;\F_p)\arrow[r,hook, "f^*"]          &H^k(BU_n;\F_p)\arrow[r,two heads,"g^*"]     &H^k(BSU_n;\F_p)
        \end{tikzcd}
    \end{equation}
    where the hooked arrows are injective and the double-headed arrows are surjective. 
    Furthermore, the following two vector subspaces are equal:
    \begin{equation}\label{eq:ImageModp_1}
        \opn{Im}\{H^*(BPU_n;\Z_{(p)})\to H^*(BU_n;\Z_{(p)})\to H^*(BU_n;\F_p)\}
    \end{equation}
    and
    \begin{equation}\label{eq:ImageModp_2}
        \opn{Im}\{H^*(BPU_n;\F_p)\to H^*(BU_n;\F_p)\}.
    \end{equation}
\end{lem}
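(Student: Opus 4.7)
The plan is to leverage the auxiliary composition $h = f \circ g \colon BSU_n \to BPU_n$, induced by the group homomorphism $SU_n \hookrightarrow U_n \twoheadrightarrow PU_n$, and then deduce both the diagram and the image equality. First I would observe that this group homomorphism is surjective with kernel $SU_n \cap Z(U_n) = \mu_n \cong \Z/n$, so the associated map of classifying spaces is a fibration with fiber $B(\Z/n) = K(\Z/n, 1)$. Because $p \nmid n$, for $R \in \{\F_p, \Z_{(p)}\}$ one has $H^i(K(\Z/n, 1); R) = 0$ for $i > 0$, and the Serre spectral sequence of $h$ collapses to give an isomorphism
$$h^* = g^* \circ f^* \colon H^*(BPU_n; R) \xrightarrow{\;\cong\;} H^*(BSU_n; R).$$

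From this, all the arrows in the diagram \eqref{eq:ImageModpDiagram} follow quickly. The surjectivity of $g^*$ is classical, coming from the quotient $R[c_1, \ldots, c_n] \twoheadrightarrow R[c_2, \ldots, c_n]$ that kills $c_1$; the injectivity of $f^*$ is then forced by $g^* \circ f^*$ being an isomorphism. The square commutes by naturality of mod $p$ reduction, and the right-hand vertical map is surjective because $H^*(BSU_n; \Z_{(p)})$ is a free $\Z_{(p)}$-module, so reduction mod $p$ meets no obstruction.

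The equality of the images \eqref{eq:ImageModp_1} and \eqref{eq:ImageModp_2} then comes from a short diagram chase. One inclusion is automatic from commutativity. For the other, given $\alpha \in H^*(BPU_n; \F_p)$, I would push to $g^* f^*(\alpha) \in H^*(BSU_n; \F_p)$, lift it along the surjective reduction $H^*(BSU_n; \Z_{(p)}) \twoheadrightarrow H^*(BSU_n; \F_p)$ to some class $\gamma$, and set $\beta := (h^*)^{-1}(\gamma) \in H^*(BPU_n; \Z_{(p)})$. Applying the commutative square, together with the observation that $g^*$ is injective on $\opn{Im}(f^*)$ (itself a consequence of $g^* \circ f^*$ being an isomorphism), one concludes that the mod $p$ reduction of $f^*(\beta)$ coincides with $f^*(\alpha)$, giving the required preimage.

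The only step requiring real care is the identification of $h$ as a fibration with fiber $K(\Z/n, 1)$ and the verification that its Serre spectral sequence really does collapse over both coefficient rings $\F_p$ and $\Z_{(p)}$; once this input is in place, everything else is formal.
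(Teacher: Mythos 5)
Your proposal is correct and takes essentially the same approach as the paper: both arguments hinge on the fact that $g^*f^*$ is an isomorphism in each row of the diagram when $p\nmid n$, and both deduce the image equality by a chase through the same commutative square. The paper states the $g^*f^*$-isomorphism without justification and finishes the image equality by a dimension count (observing that $H^k(B\PU_n;\Z_{(p)})$ maps onto $H^k(B\SU_n;\F_p)$, so $\eqref{eq:ImageModp_1}$ has dimension at least $\dim H^k(B\SU_n;\F_p)=\dim\eqref{eq:ImageModp_2}$), whereas you spell out the fibration $B(\Z/n)\to B\SU_n\to B\PU_n$ with its collapsing Serre spectral sequence, and finish by explicitly constructing the preimage $\beta=(h^*)^{-1}(\gamma)$. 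One small point to note in your final step: the conclusion that the mod $p$ reduction of $f^*(\beta)$ equals $f^*(\alpha)$ uses the automatic inclusion $\eqref{eq:ImageModp_1}\subseteq\eqref{eq:ImageModp_2}$ to place both classes inside $\operatorname{Im}(f^*_{\F_p})$ before invoking injectivity of $g^*$ on that image; as long as you make that explicit, the argument is airtight.
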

\begin{proof}
     Since $p\nmid n$, the compositions $g^*f^*$ of both rows of \eqref{eq:ImageModpDiagram} are isomorphisms. Therefore the horizontal hooked arrows and double-headed arrows are justified. The vertical double-headed one is well known. 
     
     Both \eqref{eq:ImageModp_1} and \eqref{eq:ImageModp_2} are $\F_p$-vector spaces, and \eqref{eq:ImageModp_1} is a subspace of \eqref{eq:ImageModp_2}. Therefore, it suffices to show that they have the same dimensions.     Since $g^*f^*$ of the second row is an isomorphism, the dimension of $\eqref{eq:ImageModp_2}$ equals $\opn{dim}H^k(BSU_n;\F_p)$. Since $g^*f^*$ of the first row is an isomorphism, by diagram-chasing we have the $H^k(BPU_n;\Z_{(p)})$ at the upper-left corner maps surjectively onto the $H^*(BSU_n;\F_p)$ at the lower-right corner. Therefore, the dimension of $\eqref{eq:ImageModp_1}$ is no less than $\opn{dim}H^k(BSU_n;\F_p)$. This concludes the proof.
\end{proof}

From Lemma~\ref{lem:ImageModp} and Proposition~\ref{pro:n=3} we deduce the following
\begin{prop}\label{pro:n=3generators}
    For a prime number $p\ne 3$, there exist cohomology classes 
    \begin{equation*}
        \epl_k\in H^k(B\PU_3;\F_p),\ k = 4,6,
    \end{equation*}
    satisfying
    \begin{equation*}
        H^*(B\PU_3;\F_p) = \F_p[\epl_4,\epl_6],
    \end{equation*}
   such that $f^*:H^*(B\PU_3;\F_p)\to H^*(B\U_3;\F_p)$ maps
    \begin{equation}\label{eq:n=3generators}
        \begin{split}
            &f^*(\epl_4) = c_1^2 - 3 c_2,\\
            &f^*(\epl_6) = 2c_1^3 - 9 c_1 c_2 + 27 c_3 .\\  
        \end{split}
    \end{equation}
\end{prop}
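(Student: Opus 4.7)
The plan is to combine Lemma~\ref{lem:ImageModp} with Proposition~\ref{pro:n=3} to locate two classes in $H^*(B\PU_3;\F_p)$ with the prescribed images, and then leverage the known ring structure of $H^*(B\PU_3;\F_p)$ (for $p\neq 3$) to show that these classes are polynomial generators. Setting $n=3$, Lemma~\ref{lem:ImageModp} supplies two facts that I will use throughout: the horizontal map $f^*:H^*(B\PU_3;\F_p)\to H^*(B\U_3;\F_p)$ is injective, and the composition $g^*f^*:H^*(B\PU_3;\F_p)\to H^*(B\SU_3;\F_p)$ is an isomorphism of graded $\F_p$-algebras. Since $H^*(B\SU_3;\F_p)\cong \F_p[c_2,c_3]$ with $|c_2|=4$ and $|c_3|=6$, the ring $H^*(B\PU_3;\F_p)$ is itself a polynomial algebra on two generators in degrees $4$ and $6$, with Poincar\'e series $\frac{1}{(1-t^4)(1-t^6)}$.

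Next, the equality of the subspaces \eqref{eq:ImageModp_1} and \eqref{eq:ImageModp_2} in Lemma~\ref{lem:ImageModp} says that the image of $f^*:H^*(B\PU_3;\F_p)\to H^*(B\U_3;\F_p)$ coincides with the mod-$p$ reduction of the image of the $p$-local $f^*$. Proposition~\ref{pro:n=3} identifies two generators of the latter in degrees $4$ and $6$, namely $c_1^2-3c_2$ and $2c_1^3-9c_1c_2+27c_3$; since $p\neq 3$, their mod-$p$ reductions are nonzero classes in $H^*(B\U_3;\F_p)$. By injectivity of the mod-$p$ $f^*$, I can define $\epl_4\in H^4(B\PU_3;\F_p)$ and $\epl_6\in H^6(B\PU_3;\F_p)$ uniquely as the preimages of these reductions, thereby producing classes that satisfy \eqref{eq:n=3generators}.

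Finally, to confirm that $\F_p[\epl_4,\epl_6] = H^*(B\PU_3;\F_p)$, it suffices to check algebraic independence. The images $f^*(\epl_4)=c_1^2-3c_2$ and $f^*(\epl_6)=2c_1^3-9c_1c_2+27c_3$ are algebraically independent in $\F_p[c_1,c_2,c_3]$: since $-3$ and $27$ are units mod $p$, one can solve successively for $c_2$ in terms of $c_1$ and $f^*(\epl_4)$, and then for $c_3$ in terms of $c_1,c_2$ and $f^*(\epl_6)$, so that $\{c_1,f^*(\epl_4),f^*(\epl_6)\}$ freely generates $\F_p[c_1,c_2,c_3]$. Injectivity of $f^*$ transports this algebraic independence back to $\epl_4$ and $\epl_6$, so the inclusion $\F_p[\epl_4,\epl_6]\hookrightarrow H^*(B\PU_3;\F_p)$ is a monomorphism between polynomial algebras sharing the same Poincar\'e series, hence an isomorphism. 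The only genuine technical input is the identification of the ring structure of $H^*(B\PU_3;\F_p)$ in the first paragraph, which is already packaged inside Lemma~\ref{lem:ImageModp}; the remaining steps amount to a routine leading-term calculation and a Poincar\'e-series comparison.
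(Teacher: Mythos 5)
The paper gives no explicit argument for this proposition beyond the phrase ``From Lemma~\ref{lem:ImageModp} and Proposition~\ref{pro:n=3} we deduce the following,'' and your proof is a correct elaboration of exactly that route: using the injectivity of the mod-$p$ $f^*$ and the equality of images from Lemma~\ref{lem:ImageModp} to lift the two $\Z_{(p)}$-generators from Proposition~\ref{pro:n=3} to classes $\epl_4,\epl_6$, and then pinning down the ring structure via the isomorphism $g^*f^*\colon H^*(B\PU_3;\F_p)\to H^*(B\SU_3;\F_p)$ together with a Poincar\'e series comparison. This matches the paper's intended argument; no gaps.
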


From Lemma~\ref{lem:ImageModp} and Proposition~\ref{pro:n=4} we deduce the following
\begin{prop}\label{pro:n=4generators}
    For a prime number $p\ne 2$, there exist cohomology classes 
    \begin{equation*}
        \epl_k\in H^k(B\PU_4;\F_p),\ k = 4,6,8,
    \end{equation*}
       such that $f^*:H^*(B\PU_4;\F_p)\to H^*(B\U_4;\F_p)$ maps
    \begin{equation*}
        H^*(B\PU_4;\F_p) = \F_p[\epl_4,\epl_6,\epl_8],
    \end{equation*}
    and
    \begin{equation}\label{eq:n=4generators}
        \begin{split}
            &f^*(\epl_4) = 3 c_1^2 - 8 c_2 ,\\
            &f^*(\epl_6) = c_1^3 - 4 c_1 c_2 + 8 c_3,\\
            &f^*(\epl_8) = 3 c_1^4 - 16 c_1^2 c_2 + 64 c_1 c_3 - 256 c_4.
        \end{split}
    \end{equation}
\end{prop}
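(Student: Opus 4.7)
The plan is to parallel the derivation of Proposition \ref{pro:n=3generators} from Proposition \ref{pro:n=3} and Lemma \ref{lem:ImageModp}, adapting to the case $n=4$. Since $p\neq 2$ ensures $p\nmid 4$, Lemma \ref{lem:ImageModp} applies: the map $f^*\colon H^*(B\PU_4;\F_p)\to H^*(B\U_4;\F_p)$ is injective, and its image coincides with the mod-$p$ reduction of the image of $f^*$ with $\Z_{(p)}$-coefficients. Proposition \ref{pro:n=4} identifies that image through degree $8$, and in each of degrees $4$, $6$, $8$ exhibits a generator given by the polynomial on the right-hand side of \eqref{eq:n=4generators}. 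By injectivity, I would then define $\epsilon_k$ (for $k=4,6,8$) to be the unique class in $H^k(B\PU_4;\F_p)$ satisfying the displayed formula for $f^*(\epsilon_k)$. The extra degree-$8$ generator $(3c_1^2-8c_2)^2$ of Proposition \ref{pro:n=4} is $f^*(\epsilon_4^2)$, so it need not be taken separately.

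The second task is to show $H^*(B\PU_4;\F_p)=\F_p[\epsilon_4,\epsilon_6,\epsilon_8]$. Here I would invoke the composition
\begin{equation*}
g^*f^*\colon H^*(B\PU_4;\F_p)\longrightarrow H^*(B\SU_4;\F_p)=\F_p[c_2,c_3,c_4],
\end{equation*}
which Lemma \ref{lem:ImageModp} asserts is an isomorphism since $p\nmid 4$. Because $g\colon B\SU_4\to B\U_4$ is induced by the determinant-one inclusion, we have $g^*(c_1)=0$, and a direct substitution into \eqref{eq:n=4generators} gives
\begin{equation*}
g^*f^*(\epsilon_4)=-8c_2,\qquad g^*f^*(\epsilon_6)=8c_3,\qquad g^*f^*(\epsilon_8)=-256c_4.
\end{equation*}
For odd $p$ the scalars $-8$, $8$, $-256$ are units in $\F_p$, so these three images freely generate $\F_p[c_2,c_3,c_4]$ as a polynomial algebra. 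Pulling this presentation back along the isomorphism $g^*f^*$ yields the desired ring description $H^*(B\PU_4;\F_p)=\F_p[\epsilon_4,\epsilon_6,\epsilon_8]$.

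Beyond the preparatory results cited, the argument is essentially bookkeeping, and I do not anticipate a serious obstacle. The one point that warrants a moment's care is verifying that the degree-$8$ piece of $H^*(B\PU_4;\F_p)$ is genuinely exhausted by $\epsilon_4^2$ and $\epsilon_8$, so that a single new generator in degree $8$ suffices; this becomes transparent under $g^*f^*$, since the degree-$8$ component of $\F_p[c_2,c_3,c_4]$ is spanned by $c_2^2$ and $c_4$, matching $\epsilon_4^2$ and $\epsilon_8$ via the unit scalars computed above.
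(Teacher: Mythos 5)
Your proposal is correct and fills in exactly the steps the paper leaves implicit: it uses Lemma~\ref{lem:ImageModp} to get injectivity of $f^*$ over $\F_p$ and to lift the $\Z_{(p)}$-image from Proposition~\ref{pro:n=4}, defines $\epl_4,\epl_6,\epl_8$ as the unique preimages, and then uses the isomorphism $g^*f^*\colon H^*(B\PU_4;\F_p)\to H^*(B\SU_4;\F_p)=\F_p[c_2,c_3,c_4]$ together with $g^*(c_1)=0$ and the invertibility of $-8,\,8,\,-256$ in $\F_p$ to read off the polynomial ring structure. This is precisely the route the paper intends when it says the proposition is ``deduced from'' those two preliminary results, so there is nothing to add.
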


\subsection{Cohomology of $\PU_n$ homogeneous spaces}\label{sse:homo space}

Consider a subgroup $\Gamma\subseteq\PU_n$. In order to use Proposition \ref{pro:homo genus} to calculate the homological genus of the principal $\Gamma$-bundle $\PU_n\to \PU_n/\Gamma$, we need to study its classifying map $cl: \PU_n/\Gamma\to B\Gamma$ and the induced map $cl^*$ on cohomology. Observe that the map $cl$ is a part of a fibration
\begin{equation}
\label{eq:PU/Gamma fibration}
        PU_n/\Gamma\xrightarrow{cl} B\Gamma\xrightarrow{\varphi}BPU_n
\end{equation}
where $\varphi: B\Gamma\to B\PU_n$ denotes the map induced by the inclusion $\Gamma\hookrightarrow\PU_n$.

\begin{defn}
    \label{def:regular seq}
    For a graded commutative unital ring $R$ and a graded $R$-module $M$, a sequence $\bfx = (x_1,\cdots,x_k)$ of elements in $R$ is called a \textit{regular sequence in} $M$ if 
 \begin{enumerate}
     \item multiplication by $x_1$ is an injective endomorphism on $M$, and for each $i > 1$, multiplication by $x_i$ is an injective endomorphism on $M/(x_1,\cdots,x_{i-1})M$, and
     \item $(x_1,\cdots,x_k)\neq M$.
 \end{enumerate}
\end{defn}

\begin{prop}
    \label{pro:PUn/Gamma}
    Let $\Gamma$ be a subgroup of $\PU_n$ which is isomorphic to $(\Z/q\Z)^{\times m}$ where $q$ is a power of a prime number $p\nmid n$. Let $\epl_4,\cdots,\epl_{2n}$ be generators of $H^*(B\PU_n;\F_p)$. Suppose that 
    $\varphi^*:H^*(B\PU_n;\F_p)\to H^*(B\Gamma;\F_p)$
    satisfies that  
    \begin{itemize}
        \item  when $(p,q)=(2,2)$, then $(\varphi^*(\epl_4),\cdots,\varphi^*(\epl_{2n}))$  is a regular sequence in
        $$    H^*(B\Gamma;\F_2)\cong \F_2[v_1,\cdots,v_m], \qquad\abs{v_i} = 1;
        $$
    \item  when $(p,q)\ne (2,2)$, then $(\varphi^*(\epl_4),\cdots,\varphi^*(\epl_{2n}))$ is a regular sequence in $\F_p[\xi_1,\cdots,\xi_m]$, regarded as a subring of 
    $$    H^*(B\Gamma;\F_p)\cong \F_p[\xi_1,\cdots,\xi_m]\otimes_{\F_p}\Lambda_{\F_p}[u_1,\cdots,u_m],\qquad \abs{\xi_i} = 2,\ \abs{u_i} = 1.$$
    \end{itemize}
    Then the homomorphism 
    \[cl^*\colon H^*(B\Gamma;\F_p)\to H^*(PU_n/\Gamma;\F_p)\]
    is surjective and has kernel the ideal generated by $\varphi^*(\epl_4),\cdots,\varphi^*(\epl_{2n})$.
\end{prop}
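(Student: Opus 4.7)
The plan is to compute $H^*(PU_n/\Gamma;\F_p)$ via the Eilenberg--Moore spectral sequence associated to the pullback square
\[
\begin{tikzcd}
PU_n/\Gamma \arrow[r] \arrow[d] & EPU_n \arrow[d] \\
B\Gamma \arrow[r, "\varphi"] & BPU_n
\end{tikzcd}
\]
which exhibits $PU_n/\Gamma$ as the homotopy fiber of $\varphi$. Since $BPU_n$ is simply connected (as $PU_n$ is path-connected) and the cohomologies in play have finite type, the Eilenberg--Moore theorem yields a strongly convergent, multiplicative spectral sequence
\[
E_2^{s,t} = \tor^{s,t}_{H^*(BPU_n;\F_p)}\bigl(\F_p, H^*(B\Gamma;\F_p)\bigr) \Longrightarrow H^{s+t}(PU_n/\Gamma;\F_p),
\]
where $H^*(B\Gamma;\F_p)$ becomes an $H^*(BPU_n;\F_p)$-module via $\varphi^*$. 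By Propositions~\ref{pro:n=3generators} and~\ref{pro:n=4generators}, $H^*(BPU_n;\F_p) = \F_p[\epl_4,\ldots,\epl_{2n}]$ is a polynomial ring.

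We next resolve $\F_p$ over $H^*(BPU_n;\F_p)$ by the Koszul complex on the polynomial generators $\epl_4,\ldots,\epl_{2n}$, which form a regular sequence tautologically. Tensoring this free resolution with $H^*(B\Gamma;\F_p)$ yields a Koszul complex whose differentials multiply by the classes $\varphi^*(\epl_i)$ and whose homology computes the $E_2$-page. For $(p,q)=(2,2)$, the hypothesis directly provides regularity of $(\varphi^*(\epl_i))$ in $H^*(B\Gamma;\F_2) = \F_2[v_1,\ldots,v_m]$, so the Koszul complex is acyclic in positive degrees. For $(p,q)\neq(2,2)$, the ring $H^*(B\Gamma;\F_p) = \F_p[\xi_i]\otimes\Lambda[u_i]$ is a free module over the polynomial subring $\F_p[\xi_i]$, and by hypothesis the classes $\varphi^*(\epl_i)$ lie in this subring and form a regular sequence there; since regularity is preserved under tensoring with a free module, the sequence remains regular in the full ring, and the Koszul complex is again acyclic in positive degrees. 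In either case,
\[
E_2^{s,t} = E_\infty^{s,t} = \begin{cases} H^*(B\Gamma;\F_p)\bigl/\bigl(\varphi^*(\epl_4),\ldots,\varphi^*(\epl_{2n})\bigr), & s = 0, \\ 0, & s > 0. \end{cases}
\]

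Because the $E_\infty$-page is concentrated in a single column, the spectral sequence collapses with no filtration or extension issues, and the edge homomorphism yields a canonical isomorphism $H^*(B\Gamma;\F_p)/(\varphi^*(\epl_i)) \xrightarrow{\sim} H^*(PU_n/\Gamma;\F_p)$. Identifying this edge map with $cl^*$ is routine by the naturality of the Eilenberg--Moore construction applied to the map from our pullback square to the trivial pullback over a point, after which the claimed surjectivity of $cl^*$ and the description of its kernel are immediate. The principal technical hurdle is invoking the correct convergence statement for the Eilenberg--Moore spectral sequence in this pullback setting; a secondary check, in the case $(p,q)\neq(2,2)$, is that the regularity hypothesis implicitly places $\varphi^*(\epl_i)$ in the polynomial subring $\F_p[\xi_i]$, which is consistent with $\epl_i$ being of even degree and, in applications, with $\varphi^*$ arising from a unitary representation of $\Gamma$ whose Chern classes restrict to symmetric polynomials in the weights $\xi_i$.
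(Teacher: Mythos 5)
Your proposal is correct and follows essentially the same route as the paper's proof: both use the Eilenberg--Moore spectral sequence for the fibration $\PU_n/\Gamma\to B\Gamma\to B\PU_n$, resolve $\F_p$ by the Koszul complex on $\epl_4,\ldots,\epl_{2n}$, exploit the regularity hypothesis in the polynomial subring together with flatness/freeness of the exterior factor to collapse the $E_2$-page onto the $s=0$ column, and identify the edge homomorphism with $cl^*$. The only cosmetic difference is that the paper manipulates the Koszul complex via the explicit tensor decomposition $K(\epl)\otimes_R S\cong K(\varphi^*(\epl))\otimes_{\F_p}Q$, while you phrase the same point as regularity being preserved under passing to the free $P$-module $S$; these are equivalent.
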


To prove Proposition \ref{pro:PUn/Gamma}, we will first reduce it to a result about polynomial algebras. 

The Eilenberg-Moore spectral sequence associated to the fibration (\ref{eq:PU/Gamma fibration}) is of the form
\begin{equation}
\label{eq:E2 Eilen-Moore}
    E_2^{s,t}\cong\tor^{s,t}_{H^*(B\PU_n;\F_p)}\big(\F_p, H^*(B\Gamma;\F_p)\big)\Rightarrow H^{s+t}(\PU_n/\Gamma;\F_p).
\end{equation}
There are different conventions in literature for the $(s,t)$-grading in the Eilenberg-Moore spectral sequence. Here we follow the convention in \cite{mccleary2001user}, Chapter 7 which we recall now. For a graded commutative unital ring $R$ and graded $R$-modules $M,N$, let
\begin{equation*}
    \cdots\to P^{-n}\to\cdots\to P^{-1}\to P^0\to M
\end{equation*}
be a projective $R$-resolution of $M$, where each $P^{-n}$ is a projective graded $R$-module. The total degree of an element of $P^{-n}\otimes_R N$ is called the \textit{internal degree}.  The non-positive integer $-n$ is the \textit{homological degree}. Then we have the derived functor
\begin{equation}
\label{eq:def of tor}
    \tor_R^{s,t}(M,N)\cong H_{-s,t}(P^*\otimes_R N)
\end{equation}
where $s$ and $t$ denote the homological degree and the internal degree, respectively.  In particular, we have that  $E_2^{s,t} = 0$ unless $s\leq 0$ and $t \geq 0$ and that the differentials take the form
\begin{equation}\label{eq:EM_diff}
    d_r^{s,t}\colon E_r^{s,t}\to E_r^{s+r, t-r+1}.
\end{equation}

Next, to calculate the $E_2$-page as a Tor-functor, we need to start with a resolution of $\F_p$ as a module over the ring 
$$H^*(B\PU_n;\F_p)\cong \F_p[\epsilon_{4},\epsilon_6,\cdots,\epl_{2n}].$$
One such resolution is given by the Koszul complex which we now briefly recall. Let $R$ be a graded commutative unital ring. Consider a finite sequence of elements in $R$, denoted by $\bfx = (x_1,\cdots,x_n)$. The \textit{Koszul complex of $\bfx$}, denoted 
by $K(\bfx)$, is a chain complex of $R$-modules with
\begin{equation}
    \label{eq:def Koszul complex}
    K(\bfx)_i = \extp^i R^n
\end{equation}
together with boundary operators $d_i:K(\bfx)_{i}\to K(\bfx)_{i-1}$ defined in the following way. Let $\{e_i : 1\leq i\leq n\}$ denote the standard basis of $R^n$. Then $K(\bfx)_i$ has a basis of the form 
 \begin{equation*}
     \{e_{j_1}\wedge\cdots\wedge e_{j_i}\ :\ 1\leq j_1<j_2\cdots<j_i\leq n\}.
 \end{equation*}
The boundary operator $d_{i}$ is an $R$-linear map such that
 \begin{equation*}
     d_i(e_{j_1}\wedge\cdots\wedge e_{j_i}) = \sum_{k=1}^i (-1)^{k+1}x_{j_k}  e_{j_1}\wedge\cdots\Hat{e}_{j_k}\cdots\wedge e_{j_i}.
 \end{equation*}
We record the following two standard facts about the Koszul complex.
\begin{prop}[special case of Corollary 4.5.5 in \cite{weibel1994introduction}]
\label{pro:resolution}
Let $R = \F[x_1,\cdots,x_n]$ be a polynomial algebra over a field $\F$. Let $\bfx = (x_1,\cdots,x_n)$. Define $\varepsilon\colon K(\bfx)_0 = R\to\F$ by
 \[\varepsilon(f(x_1,\cdots,x_n)) = f(0,\cdots,0).\]
Then the chain complex 
    \[\cdots\xrightarrow{d}K(\bfx)_i\xrightarrow{d}\cdots\to K(\bfx)_0\xrightarrow{\varepsilon}\F\to 0\]
    is a projective resolution of $\F$ as an $R$-module. 
 \end{prop}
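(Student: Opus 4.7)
The plan is to establish the two claims implicit in the statement: that each $K(\bfx)_i$ is a projective $R$-module, and that the augmented complex is exact. The first is immediate: $K(\bfx)_i = \extp^i R^n$ is a free $R$-module of rank $\binom{n}{i}$, hence projective.

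For exactness, I would exploit the standard structural fact that the Koszul complex factors as an iterated tensor product $K(\bfx) \cong K(x_1) \otimes_R \cdots \otimes_R K(x_n)$, where each $K(x_i)$ is the two-term complex $R \xrightarrow{x_i} R$ placed in homological degrees $1$ and $0$. I would then prove the slightly more general statement by induction on $n$: for any commutative ring $R$ and any regular sequence $(y_1,\ldots,y_n)$ in $R$, the Koszul complex $K(y_1,\ldots,y_n)$ augmented by $R \to R/(y_1,\ldots,y_n)$ is exact. The proposition follows by specializing to $R = \F[x_1,\ldots,x_n]$ and the regular sequence $(x_1,\ldots,x_n)$, whose quotient is precisely $\F$.

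The base case $n = 1$ is the statement that $R \xrightarrow{x_1} R \to R/(x_1) \to 0$ is exact, which just asserts that $x_1$ is a non-zero divisor. For the inductive step, I would use the tensor product presentation $K(\bfx) = K(x_1,\ldots,x_{n-1}) \otimes_R K(x_n)$ to extract a short exact sequence of complexes
$$0 \to K(x_1,\ldots,x_{n-1}) \to K(\bfx) \to K(x_1,\ldots,x_{n-1})[-1] \to 0,$$
and pass to the associated long exact sequence in homology. By the inductive hypothesis, $K(x_1,\ldots,x_{n-1})$ has homology concentrated in degree zero where it equals $R/(x_1,\ldots,x_{n-1})$, and a direct inspection of the boundary in the tensor product identifies the connecting homomorphism with multiplication by $\pm x_n$ on this quotient. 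The regular-sequence hypothesis guarantees this map is injective, from which $H_i(K(\bfx)) = 0$ for $i > 0$ and $H_0(K(\bfx)) = R/(x_1,\ldots,x_n)$ follow at once.

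The only step requiring genuine care is the identification of the connecting homomorphism with multiplication by $\pm x_n$, which is a matter of tracking the sign convention in the tensor-product differential; the rest is routine bookkeeping. Since the result is classical (Corollary 4.5.5 of \cite{weibel1994introduction}), I do not expect any substantive obstacle beyond recording these standard manipulations.
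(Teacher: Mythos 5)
The paper itself offers no proof of this proposition; it is quoted verbatim as a special case of Corollary 4.5.5 in Weibel, so the natural comparison is with the argument given there. Your proof is correct and follows essentially Weibel's route: decompose $K(\bfx)\cong K(x_1)\otimes_R\cdots\otimes_R K(x_n)$, extract the short exact sequence of complexes from tensoring with the two-term complex $K(x_n)$, and induct on $n$, using that the connecting map on $H_0$ is multiplication by $\pm x_n$ on $R/(x_1,\ldots,x_{n-1})$, which is injective by regularity. As a minor bonus, the more general statement you prove (the Koszul complex of a regular sequence resolves $R/(\bfx)$ over any commutative ring) also subsumes Proposition~\ref{pro:Koszul}, which the paper cites separately.
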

\begin{prop}[Corollary 4.5.4 in \cite{weibel1994introduction}]\label{pro:Koszul}
    If $\bfx$ is a regular sequence on $M$, then we have
    \begin{equation*}
        H_i(K(\bfx)\otimes_R M) = 
        \begin{cases}
            M/\bfx M,\ i = 0,\\
            0,\ i > 0.
        \end{cases}
    \end{equation*}
\end{prop}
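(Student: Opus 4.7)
The plan is to induct on the length $n$ of the sequence $\bfx = (x_1,\ldots,x_n)$, using the tensor-product structure of the Koszul complex. For the base case $n = 1$, the complex $K(x_1)\otimes_R M$ is the two-term complex $M \xrightarrow{x_1} M$ concentrated in degrees $1$ and $0$. Its $H_1$ equals the kernel of multiplication by $x_1$ on $M$, which vanishes because $x_1$ acts injectively on $M$ by definition of a regular sequence; its $H_0$ equals $\opn{coker}(x_1) = M/x_1M$, as required.

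For the inductive step, set $\bfx' = (x_1,\ldots,x_{n-1})$ and exploit the canonical isomorphism of chain complexes $K(\bfx) \cong K(\bfx')\otimes_R K(x_n)$, which is immediate from the exterior-algebra description $K(\bfx)_i = \extp^i R^n$ together with the standard splitting $\extp^i(R^{n-1}\oplus R) \cong \extp^i R^{n-1}\oplus \extp^{i-1} R^{n-1}$. For any chain complex $C$ of $R$-modules, the complex $C\otimes_R K(x_n)$ is isomorphic to the mapping cone of $x_n\colon C\to C$, giving a short exact sequence of complexes
$$0 \to C \to C\otimes_R K(x_n) \to C[-1] \to 0$$
whose long exact sequence in homology has connecting map $\pm x_n$. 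Specializing $C = K(\bfx')\otimes_R M$ and applying the induction hypothesis, which is legitimate because $\bfx'$ remains regular on $M$, yields $H_i(C) = 0$ for $i > 0$ and $H_0(C) = M/(x_1,\ldots,x_{n-1})M$.

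The conclusion is then a direct reading of the long exact sequence. For $i \geq 2$ the surrounding terms $H_i(C)$ and $H_{i-1}(C)$ both vanish, so $H_i(K(\bfx)\otimes_R M) = 0$. For $i = 1$ the exact segment
$$0 \to H_1(K(\bfx)\otimes_R M) \to M/(x_1,\ldots,x_{n-1})M \xrightarrow{x_n} M/(x_1,\ldots,x_{n-1})M$$
combined with the regularity hypothesis, which forces $x_n$ to act injectively on $M/(x_1,\ldots,x_{n-1})M$, implies $H_1 = 0$. For $i = 0$ the tail of the long exact sequence
$$M/(x_1,\ldots,x_{n-1})M \xrightarrow{x_n} M/(x_1,\ldots,x_{n-1})M \to H_0(K(\bfx)\otimes_R M) \to 0$$
identifies $H_0(K(\bfx)\otimes_R M)$ with $M/\bfx M$.

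The one point requiring care is the mapping-cone identification $K(\bfx)\otimes_R M \cong \opn{Cone}\bigl(x_n\colon K(\bfx')\otimes_R M \to K(\bfx')\otimes_R M\bigr)$ and the associated sign bookkeeping in the connecting homomorphism; once a sign convention for the Koszul differential and the shift $C[-1]$ is fixed, this is entirely standard homological algebra and the rest of the argument is formal. No deeper obstacle arises, and the proof strategy is exactly that of \cite{weibel1994introduction}, Corollary 4.5.4.
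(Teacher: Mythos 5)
Your proof is correct: the paper offers no proof of this proposition, citing it directly as Corollary 4.5.4 of Weibel, and your induction via the mapping-cone decomposition $K(\bfx)\cong K(\bfx')\otimes_R K(x_n)$ is exactly the standard argument given there. The only point worth noting is that the truncated sequence $\bfx'$ inherits condition (2) of Definition \ref{def:regular seq} as well as condition (1) (since $(x_1,\dots,x_{n-1})M=M$ would force $\bfx M=M$), so the appeal to the induction hypothesis is indeed legitimate.
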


Now we can finish the proof of Proposition \ref{pro:PUn/Gamma}.
\begin{proof}[Proof of Proposition \ref{pro:PUn/Gamma}]
First consider the case when $(p,q)\ne(2,2)$. Let us abbreviate:
\begin{align*}    
    R&:=H^*(B\PU_n;\F_p)\cong \F_p[\epsilon_{4},\epsilon_6,\cdots,\epl_{2n}], &|\epl_{i}|=i,\\
    P&:= \F_p[\xi_1,\cdots,\xi_m], &\abs{\xi_j} = 2,\\
    Q& := \Lambda_{\F_p}[u_1,\cdots,u_m], &\abs{u_j} = 1,\\
    S&:=H^*(B\Gamma;\F_p)\cong P\otimes_{\F_p}Q.
\end{align*}
  Then $\varphi^*:R\to S$ makes $S$ an $R$-module. 
  Let $\epl$ denote the sequence
    \[(\epl_4,\epl_6,\cdots,\epl_{2n})\]
    and $\varphi^*(\epl)$ the sequence
    \begin{equation}
        \label{eq:phi of epsilon}
        (\varphi^*(\epl_4),\varphi^*(\epl_6),\cdots,\varphi^*(\epl_{2n})).
    \end{equation}
    By the definition of Koszul complex \eqref{eq:def Koszul complex} and by the explicit presentations of $P$, $Q$, $R$ and $S$, we have the following isomorphism of chain complexes of $S$-modules
    \begin{equation}\label{eq:EN_Koszul}
        K(\epl)\otimes_R S\cong K(\epl)\otimes_R (P\otimes_{\F_p} Q)\cong
        K(\varphi^*(\epl))\otimes_{\F_p}Q
    \end{equation}
Now we can calculate the $E_2$-page of the Eilenberg-Moore spectral sequence \eqref{eq:E2 Eilen-Moore}:
\begin{align*}
    E_2^{s,t}&\cong \tor^{s,t}_{R}(\F_p, S) 
     \cong H_{-s,t}(K(\epl)\otimes_R S) &\text{by \eqref{eq:def of tor} and Proposition \ref{pro:resolution}}\\
    &\cong H_{-s,t}\Big(K\big(\varphi^*(\epl)\big)\otimes_{\F_p}Q\Big) &\text{by \eqref{eq:EN_Koszul}}\\
    &\cong H_{-s,t}\Big(K\big(\varphi^*(\epl)\big)\Big)\otimes_{\F_p}Q &\text{since $Q$ is a flat $\F_p$-module}\\
    &\cong\begin{cases}
            H^t(B\Gamma)/(\varphi^*(\epl_4),\cdots,\varphi^*(\epl_{2n})),\ s = 0, \\
            0,\ s\neq 0
        \end{cases}
        &\text{by Proposition \ref{pro:Koszul}}
\end{align*}
The hypothesis that $\varphi^*(\epl)$ is a regular sequence in $P$ was used in the last isomorphism. Since $E_2$ is zero off the vertical axis, all differentials must be zero and $E_2 = E_\infty$. The edge morphism of the spectral sequence gives the following commutative diagram:
\[\begin{tikzcd}
	{S=H^*(B\Gamma;\F_p)} & {H^*(\PU_n/\Gamma;\F_p)} \\
	{\F_p\otimes_R S=E_2^{0,*}} & {E_\infty^{0,*}}
	\arrow["{cl^*}", from=1-1, to=1-2]
	\arrow[two heads, from=1-1, to=2-1]
	\arrow["{=}"', from=2-1, to=2-2]
	\arrow[hook, from=2-2, to=1-2]
\end{tikzcd}\]
Since $E_\infty^{s,t}=0$ when $s\ne0$, the second vertical inclusion in the diagram above is in fact an isomorphism $H^*(\PU_n/\Gamma;\F_p)\cong E_\infty^{0,*}$. In particular, $cl^*$ is surjective. 

If $(p,q)=(2,2)$, then 
$$S:=H^*(B\Gamma;\F_2)\cong \F_2[v_1,\cdots,v_m], \qquad\abs{v_i} = 1.$$
The proof above works in the same way after we replace \eqref{eq:EN_Koszul} by the  isomorphism of chain complexes of $S$-modules $K(\epl)\otimes_R S  \cong K(\varphi^*(\epl))$.
\end{proof}

\section{The 27 lines on cubic surfaces}
This section is devoted to proving Theorem \ref{SG line} about the 27 lines on cubic surfaces. Recall that we defined
\begin{align*}
    \Bl&:=\{\text{nonsingular homogeneous cubic polynomials } F(x,y,z,w)\}/\C^\times\\
    \El&:=\{(F,l_1,\cdots, l_{27}) :  \text{ $l_i$'s are the 27 distinct lines on the cubic surface $F=0$}\}
\end{align*}
The projection  $\pi: \El\to \Bl$ given by $(F,l_1,\cdots, l_{27})\mapsto F$ is a normal $S_{27}$-cover. This cover is not connected since its monodromy representation $\rho:\pi_1(\Bl)\to S_{27}$ is not surjective. The image of $\rho$ is a proper subgroup of $S_{27}$ which is isomorphic to the Weyl group $W(E_6)$. See e.g. \cite{Harris} for more details. 
$\PU_4$
acts naturally on $\CP^3$ and hence also on $\Bl$ and $\El$. The covering map $\pi:\El\to\Bl$ is $\PU_4$-equivariant.

We consider the Fermat cubic surface 
$$F(x,y,z,w)=x^3+y^3+z^3+w^3.$$
Consider the following subgroup $K\le \PU_4$ preserving the Fermat cubic surface $F$:
\begin{equation}
    \label{eq:K27 definition}
    K := \Bigg\langle 
    \begin{bmatrix}
     e^{2\pi i/3} & 0 & 0 & 0\\
    0 & 1 & 0 & 0\\
    0 & 0 & 1 & 0\\    
    0 & 0  & 0 & 1\\    
\end{bmatrix},\
\begin{bmatrix}
     1 & 0 & 0 & 0\\
    0 & e^{2\pi i/3} & 0 & 0\\
    0 & 0 & 1 & 0\\    
    0 & 0  & 0 & 1\\    
\end{bmatrix},\
\begin{bmatrix}
    1 & 0 & 0 & 0\\
    0 & 1 & 0 & 0\\
    0 & 0 & e^{2\pi i/3} & 0\\    
    0 & 0  & 0 & 1\\    
\end{bmatrix}
\Bigg\rangle
\cong (\Z/3\Z)^3. 
\end{equation}

\begin{lem}
\label{Fermat cubic surface automorphism group acting faithfully on 27 lines}
    The action of $K$ on the set of 27 lines on the Fermat cubic surface is faithful. In other words, the induced group homomorphism $K\to S_{27}$ is injective.
\end{lem}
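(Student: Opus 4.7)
The plan is to parametrize the $27$ lines very explicitly and then read the $K$-action off by inspection. Classically, the $27$ lines on the Fermat cubic fall into three families of nine, indexed by the three partitions of $\{x,y,z,w\}$ into two unordered pairs. For the partition $\{x,y\} \cup \{z,w\}$, the nine lines are
\[
L^{(1)}_{\zeta,\eta}\ :=\ \{x + \zeta y = 0,\ z + \eta w = 0\},
\]
where $\zeta$ and $\eta$ range independently over the three cube roots of $-1$; the families $L^{(2)}_{\zeta,\eta}$ and $L^{(3)}_{\zeta,\eta}$ coming from the partitions $\{x,z\}\cup\{y,w\}$ and $\{x,w\}\cup\{y,z\}$ are defined analogously. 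A first step is to check that these $3 \cdot 9 = 27$ lines are pairwise distinct and lie on $\{F = 0\}$, which is immediate from $\zeta^3 = \eta^3 = -1$.

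Next, I would compute the action of a general element $g = \operatorname{diag}(\omega^a, \omega^b, \omega^c, 1) \in K$ (with $\omega = e^{2\pi i/3}$) on each family. Since $g$ is diagonal, pulling the defining equations of $L^{(1)}_{\zeta,\eta}$ back along $g^{-1}$ yields $g \cdot L^{(1)}_{\zeta,\eta} = L^{(1)}_{\omega^{a-b}\zeta,\,\omega^c \eta}$. Analogous computations for the other two families show that $g$ rescales the pair of parameters by $(\omega^{a-c}, \omega^b)$ on family $2$ and by $(\omega^a, \omega^{b-c})$ on family $3$. The underlying pattern is that the two rescaling exponents are always the differences of the two diagonal entries of $g$ corresponding to the two variables lying in the same part of the partition.

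Finally, suppose that $g$ acts as the identity on the set of $27$ lines. Then each of the six rescaling factors above equals $1$. In fact, reading off only the second-coordinate factors across the three families already yields $\omega^c = \omega^b = \omega^a = 1$, so $a \equiv b \equiv c \equiv 0 \pmod 3$ and $g$ is the identity in $K$; hence $K \to S_{27}$ is injective. The argument is essentially mechanical, and no deeper difficulty arises; the only mild care needed is keeping the sign convention straight (the relevant scalars $\zeta,\eta$ are cube roots of $-1$, not of unity) and being consistent about pullback versus pushforward when computing the image of a line under a projective automorphism.
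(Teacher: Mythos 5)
Your strategy---parametrizing the $27$ lines by three families of nine and reading off the $K$-action on the parameters---is correct in outline and is genuinely different from the paper's proof. The paper asserts that no nontrivial element of $K$ fixes the single line $\{x+y=z+w=0\}$, but that line is preserved by the nontrivial element $\mathrm{diag}(\omega,\omega,1,1)\in K$ (here $\omega=e^{2\pi i/3}$), which sends $[s:-s:t:-t]$ to $[\omega s:-\omega s:t:-t]$. Indeed, in your notation every $L^{(i)}_{\zeta,\eta}$ has a $K$-stabilizer of order $3$ (for family $1$ it is $\{a\equiv b,\ c\equiv 0\}$, and cyclically for the other two), so no single line can witness faithfulness; tracking the action on lines from at least two different families, as you do, is what actually makes the argument go through.

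There are two small slips in the write-up, neither fatal. First, the sign convention is backwards: with $L^{(1)}_{\zeta,\eta}=\{x+\zeta y=z+\eta w=0\}$ one has $x=-\zeta y$, so $x^3+y^3=(1-\zeta^3)y^3$, and $\zeta,\eta$ must be cube roots of $1$, not of $-1$ (cube roots of $-1$ would be the right choice if you had written the lines as $x=\zeta y$). Second, the second-coordinate rescaling factors across the three families are $\omega^{c}$, $\omega^{b}$, and $\omega^{b-c}$; the factor $\omega^{a}$ is not among them, so those three alone give only $b\equiv c\equiv 0$. To force $a\equiv 0$ you must also invoke a first-coordinate factor, e.g.\ $\omega^{a}$ from family $3$ or $\omega^{a-b}$ from family $1$. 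Once all six factors are required to equal $1$, the conclusion $a\equiv b\equiv c\equiv 0\pmod 3$ follows and the proof is complete.
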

\begin{proof}
    It suffices to check that no nontrivial element in $K$ can simultaneously fix all the 27 lines on the Fermat cubic surface. In fact, no nontrivial element in $K$ can fix the line on the Fermat cubic surface defined by  
    $x+y=z+w=0.$
\end{proof}
The $\PU_4$-orbit of the Fermat cubic surface $F$ gives a well-defined continuous map 
\begin{equation}
    \label{eq:orbit map lines}
\eta: \PU_4/K\to \Bl, \qquad
    \eta(g)=gF.
\end{equation}

\begin{prop}
\label{pro:pullback K disconnected}
Consider the pullback of the cover $\pi:\El\to \Bl$ along the map $\eta$
\[\begin{tikzcd}
	\eta^*\El && \El  \\
	\PU_4/K && \Bl 
	\arrow["{}"', from=1-1, to=2-1]
	\arrow["{\pi}", from=1-3, to=2-3]
	\arrow[""',"\eta", from=2-1, to=2-3]
	\arrow[""', "", from=1-1, to=1-3]
\end{tikzcd}\]
Then the total space of the pullback is homeomorphic to a disjoint union of copies of $\PU_4$:
$$\eta^*\El\cong \bigcup_{S_{27}/K}\PU_4$$
where the components are in bijection with $K$-cosets in $S_{27}$. Here we identify $K$ as a subgroup of $S_{27}$ by Lemma \ref{Fermat cubic surface automorphism group acting faithfully on 27 lines}.
\end{prop}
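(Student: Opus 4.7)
The plan is to build an explicit continuous surjection $\Phi\colon \PU_4\times S_{27}\twoheadrightarrow\eta^*\El$, identify its fibers with the orbits of a free right $K$-action on the domain, and then partition $S_{27}$ into $K$-orbits to read off the connected components of the pullback. Fix an ordering $(L_1,\ldots,L_{27})$ of the $27$ lines on the Fermat cubic surface $F$. Since $\PU_4$ acts on $\CP^3$ by sending lines to lines, the $27$ lines on $gF$ are exactly $gL_1,\ldots,gL_{27}$; this lets me define
\[
\Phi(g,\sigma):=\bigl(gK,\,(gF,\,gL_{\sigma(1)},\ldots,gL_{\sigma(27)})\bigr).
\]
This map is clearly continuous, and it is surjective because any point $(gK,(gF,l_1,\ldots,l_{27}))\in\eta^*\El$ has $l_i=gL_{\sigma(i)}$ for a unique $\sigma\in S_{27}$.

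A direct comparison of coordinates shows that $\Phi(g,\sigma)=\Phi(g',\sigma')$ if and only if $g'=gk$ for some $k\in K$ and $L_{\sigma(i)}=kL_{\sigma'(i)}$ for every $i$. Writing $\tau_k\in S_{27}$ for the image of $k$ under the injection $K\hookrightarrow S_{27}$ supplied by Lemma~\ref{Fermat cubic surface automorphism group acting faithfully on 27 lines}, the second condition is equivalent to $\sigma'=\tau_k^{-1}\sigma$. Hence the fibers of $\Phi$ are precisely the orbits of the free right action
\[
(g,\sigma)\cdot k=(gk,\,\tau_k^{-1}\sigma),\qquad k\in K.
\]
In particular, the restriction of $\Phi$ to any slice $\PU_4\times\{\sigma\}$ is injective, since faithfulness of $K\hookrightarrow S_{27}$ forces $\tau_k^{-1}\sigma\neq\sigma$ whenever $k\neq e$.

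I would then decompose $S_{27}$ into $K$-orbits for the left multiplication action $\sigma\mapsto\tau_k^{-1}\sigma$; the orbits are the right cosets $K\sigma\in K\backslash S_{27}$, of which there are $|S_{27}|/|K|$. For each coset set $C_{K\sigma}:=\Phi(\PU_4\times\{\sigma\})\subseteq\eta^*\El$; this is independent of the chosen representative, the $C_{K\sigma}$ are pairwise disjoint by the fiber description, and together they exhaust $\eta^*\El$ by surjectivity of $\Phi$. The slice restriction supplies a continuous bijection $\PU_4\to C_{K\sigma}$. The only step I expect to require care is upgrading this continuous bijection to a homeomorphism: both $\PU_4\to\PU_4/K$ and the restriction $C_{K\sigma}\to\PU_4/K$ of the pullback covering $\eta^*\El\to\PU_4/K$ are finite covering maps, and $\PU_4\to C_{K\sigma}$ is a fiber-preserving continuous bijection between them, hence a local homeomorphism and therefore a homeomorphism. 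Assembling the pieces yields $\eta^*\El\cong\bigsqcup_{K\backslash S_{27}}\PU_4$, with components indexed by $K$-cosets in $S_{27}$, as claimed.
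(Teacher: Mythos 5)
Your proof is correct, but it takes a different route from the paper's. The paper observes that $\eta^*\El$ carries a free left $\PU_4$-action $h\cdot(gK,l_1,\ldots,l_{27})=(hgK,hl_1,\ldots,hl_{27})$, with freeness following from Lemma \ref{Fermat cubic surface automorphism group acting faithfully on 27 lines}; it then concludes immediately that $\eta^*\El$ is a disjoint union of $\PU_4$-orbits, each homeomorphic to $\PU_4$, and counts the orbits by comparing the degree $|S_{27}|$ of the cover $\eta^*\El\to\PU_4/K$ with the degree $|K|$ of each orbit over $\PU_4/K$. You instead construct the explicit surjection $\Phi\colon\PU_4\times S_{27}\to\eta^*\El$ and identify its fibers with the orbits of a free right $K$-action on the domain, so that the component decomposition is read off directly from the partition of $S_{27}$ into $K$-cosets. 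Both arguments hinge on the same input---faithfulness of the $K$-action on the $27$ lines---but your parametrization makes the indexing by cosets entirely transparent rather than a consequence of a degree count, at the cost of a longer verification. Two small remarks: you could shorten the final homeomorphism step by noting that $\PU_4$ is compact and $C_{K\sigma}$ is Hausdorff, so a continuous bijection between them is automatically a homeomorphism; and it is worth making explicit that the $C_{K\sigma}$ are open (they are finitely many compact, hence closed, sets partitioning $\eta^*\El$), which is what justifies calling $C_{K\sigma}\to\PU_4/K$ a restriction of the covering.
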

\begin{proof}
    By definition of pullback, we have
    $$\eta^*\El = \{(gK,l_1,...,l_{27}) : gK\in\PU_4/K \text{ and $l_i$'s are the 27 lines on the surface $gF$}\}.$$
    This space admits a natural action by $\PU_4$ where $h\cdot (gK,l_1,...,l_{27}) = (hgK,hl_1,...,hl_{27})$. Lemma \ref{Fermat cubic surface automorphism group acting faithfully on 27 lines} tells us that this action is free. Hence, $\eta^*\El$ is a disjoint union of $\PU_4$-orbits, where each orbit is a copy of $\PU_4$. Since $\eta^*\El$ is an $S_{27}$-cover of $\PU_4/K$, the components of $\eta^*\El$ must be in bijection of $S_{27}/K$.
\end{proof}
Now we have
\begin{align*}
    g(\El\to\Bl)&\ge g(\eta^*\El\to\PU_4/K)    &\text{by Proposition \ref{pro:pullback SG}}\\
    &=g(\PU_4\to \PU_4/K)&\text{by Proposition \ref{pro:pullback K disconnected} and Proposition \ref{pro:disconnected covers}}
\end{align*}
The proof of Theorem \ref{SG line} will be complete after we prove the following proposition:

\begin{prop}
\label{pro: genus K27}
$g(\PU_4\to\PU_4/K)= 16.$
\end{prop}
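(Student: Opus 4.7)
The plan is to establish $g=16$ by matching a dimension-based upper bound with a cohomological lower bound. Since $K$ is finite and acts freely (by right multiplication) on the connected compact Lie group $\PU_4$, the quotient $\PU_4/K$ is a closed orientable manifold of real dimension $\dim\PU_4 = 15$; orientability holds because right translations in a connected Lie group are isotopic to the identity, so $K$ acts by orientation-preserving diffeomorphisms. The upper bound $g\le 16$ is then immediate from Corollary \ref{cor:coho dim}.

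For the matching lower bound I will apply Proposition \ref{pro:homo genus} with coefficients $A=\F_3$ to the classifying map $cl\colon\PU_4/K\to BK$. Since $\PU_4/K$ is an orientable closed $15$-manifold, $H^{15}(\PU_4/K;\F_3)\cong\F_3$, so it suffices to show $cl^*$ is surjective in degree $15$; in fact I will show $cl^*$ is surjective in all degrees by applying Proposition \ref{pro:PUn/Gamma} with $(n,p,q,m)=(4,3,3,3)$ (noting $3\nmid 4$ and $(p,q)\ne(2,2)$). The key hypothesis is that $(\varphi^*(\epl_4),\varphi^*(\epl_6),\varphi^*(\epl_8))$ forms a regular sequence in $\F_3[\xi_1,\xi_2,\xi_3]$. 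To compute $\varphi^*$, I note that the inclusion $K\hookrightarrow\PU_4$ lifts to $K\hookrightarrow\U_4$, factoring $\varphi$ as $BK\xrightarrow{\psi}B\U_4\xrightarrow{f}B\PU_4$, where $\psi$ classifies the $4$-dimensional representation of $K$ which splits as $\chi_1\oplus\chi_2\oplus\chi_3\oplus\mathbf{1}$ with $\chi_i$ dual to the $i$-th coordinate of $(\Z/3)^3$. Hence $\psi^*(c_i)$ equals the $i$-th elementary symmetric polynomial $s_i$ in $\xi_1,\xi_2,\xi_3$ for $i=1,2,3$ and $\psi^*(c_4)=0$. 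Combining with Proposition \ref{pro:n=4generators} reduced mod $3$ yields
\[
\varphi^*(\epl_4)\equiv s_2,\quad \varphi^*(\epl_6)\equiv s_1^3-s_1s_2-s_3,\quad \varphi^*(\epl_8)\equiv -s_1^2s_2+s_1s_3 \pmod{3}.
\]

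The hard part will be verifying regularity of this explicit triple, but this reduces to an easy geometric check. Because $\F_3[\xi_1,\xi_2,\xi_3]$ is Cohen–Macaulay, it suffices to show the common zero locus of the three polynomials over $\overline{\F_3}$ is zero-dimensional, and by homogeneity this amounts to showing the only common zero is the origin. Setting $s_2=0$ reduces the second equation to $s_1^3=s_3$ and the third to $s_1s_3=0$; whether $s_1=0$ or $s_3=0$, one immediately deduces $s_1=s_2=s_3=0$, forcing $\xi_1=\xi_2=\xi_3=0$. Regularity follows; Proposition \ref{pro:PUn/Gamma} then gives the surjectivity of $cl^*$, and Proposition \ref{pro:homo genus} yields $g\ge 16$, completing the argument.
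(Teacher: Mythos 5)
Your proof is correct and follows the same overall strategy as the paper: obtain the upper bound from Corollary~\ref{cor:coho dim} and dimension, compute $\varphi^*$ via the factorization through $\U_4$ (you present this as a $4$-dimensional representation $\chi_1\oplus\chi_2\oplus\chi_3\oplus\mathbf{1}$, the paper via a maximal torus, but these are the same computation and both arrive at $\varphi^*(\epl_4)=\sigma_2$, $\varphi^*(\epl_6)=\sigma_1^3-\sigma_1\sigma_2-\sigma_3$, $\varphi^*(\epl_8)=\sigma_1\sigma_3-\sigma_1^2\sigma_2$), then apply Propositions~\ref{pro:PUn/Gamma} and~\ref{pro:homo genus}. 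The one genuine point of divergence is how regularity of this triple is verified. The paper works through Lemma~\ref{lem:regular_permute} and Lemma~\ref{lem:local_regular}, starting from the obviously regular $(\sigma_1,\sigma_2,\sigma_3)$ and transforming it step by step, using permutability of regular sequences in a Noetherian local ring. You instead invoke the Cohen--Macaulay system-of-parameters criterion: in $\F_3[\xi_1,\xi_2,\xi_3]$, three homogeneous elements of positive degree form a regular sequence iff their common vanishing locus over $\overline{\F_3}$ is the origin, and you check this by a short direct substitution. Your route is cleaner and avoids the auxiliary commutative-algebra lemmas entirely. You also make explicit that $\PU_4/K$ is orientable (right translations on a connected Lie group are isotopic to the identity), which is needed to conclude $H^{15}(\PU_4/K;\F_3)\cong\F_3$; the paper leaves this implicit, though it also follows from the Poincar\'e-series computation of the quotient algebra produced by Proposition~\ref{pro:PUn/Gamma}.
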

\begin{proof}
The upper bound $g(\PU_4\to \PU_4/K)\le 16$ follows from Corollary \ref{cor:coho dim} together with the fact that $\PU_3/K$ is a compact manifold of dimension 15. We focus on the lower bound. 

    In this proof, we will only use cohomology with $\F_3$-coefficients and thus will suppress $\F_3$ in notation. 
    As in the previous section, let $\varphi:BK\to B\PU_4$ denote the map induced by the inclusion $K\hookrightarrow \PU_4$. Notice that the inclusion $K\hookrightarrow \PU_4$ factors as a composition of
    $K\hookrightarrow T\hookrightarrow \U_4\twoheadrightarrow \PU_4$ where $T$ denotes a maximal torus of $\U_4$ consisting of matrices with 0's off diagonal. Therefore, we have
    \begin{equation}
    \label{eq:FFactor}
    \begin{tikzcd}
	H^*(B\PU_4) & H^*(B\U_4) & H^*(BT) & H^*(BK)
	\arrow[from=1-1, to=1-2]
	\arrow["\varphi^*"', bend right=10, from=1-1, to=1-4]
	\arrow[from=1-2, to=1-3]
	\arrow[from=1-3, to=1-4]
    \end{tikzcd}
    \end{equation}
        the composition of which is exactly $\varphi^*$. Proposition \ref{pro:n=4generators} gave 
    \begin{equation}
        \label{eq:CohomologyOfBPU4}
        H^*(B\PU_4)\cong \F_3[\epsilon_4,\epsilon_6,\epsilon_8],\qquad\qquad\qquad
        |\epsilon_i|=i.
    \end{equation}
    We also have the following standard results
\begin{align}
    \label{eq:CohomologyOfBU4}
    &H^*(B\U_4) \cong \F_3[c_1,\cdots,c_4], &|c_i|=2i,\\
    \label{eq:CohomologyOfBT}
    &H^*(BT) \cong \F_3[\tau_1,\cdots,\tau_4], &|\tau_i|=2,\\
    \label{eq:CohomologyOfBK}
    &H^*(BK) \cong \F_3[\xi_1,\xi_2,\xi_3]\otimes\Lambda_{\F_3}[u_1,u_2,u_3],&|\xi_i|=2,\ |u_i|=1.
\end{align}
The element $c_i$ in (\ref{eq:CohomologyOfBU4}) is the $i$-th Chern class modulo 3. The last isomorphism (\ref{eq:CohomologyOfBK}) follows from $K\cong (\Z/3\Z)^3$. Now we will describe the three maps in (\ref{eq:FFactor}). The first map $H^*(B\PU_4)\to H^*(B\U_4)$ was already determined in Proposition \ref{pro:n=4generators}. The second map $H^*(B\U_4)\to H^*(BT)$ takes $c_i$ to the $i$-th elementary symmetric polynomial $\sigma_i$ in variables $\tau_j$'s. The third map $H^*(BT)\to H^*(BK)$ takes $\tau_i\mapsto \xi_i$ for $i=1,2,3$ and $\tau_4\mapsto 0$ as we can see from the explicit embedding $K\hookrightarrow T$  defined in \eqref{eq:K27 definition}. Hence, if we compose the three maps in (\ref{eq:FFactor}), we conclude that 
    \begin{align*}
    \varphi^* (\epsilon_4)&=\sigma_{2}\\
    \varphi^* (\epsilon_6)&=
    \sigma_{1}^{3} -\sigma_{1}\sigma_{2}-\sigma_{3}\\
    \varphi^* (\epsilon_8)&=
    \sigma_{1}\sigma_{3}-\sigma_{1}^{2}\sigma_{2}
    \end{align*}
where $\sigma_i$ now denotes the $i$-th elementary symmetric polynomials in the variables $\xi_1,\xi_2,\xi_3$ which are elements in $H^*(BK)$.

    Next, in order to apply Proposition \ref{pro:PUn/Gamma}, we need to show that 
    \[(\sigma_2,\ \sigma_1^3 - \sigma_1\sigma_2 - \sigma_3,\ \sigma_1\sigma_3 - \sigma_1^2\sigma_2)\] 
    forms a regular sequence in $\F_3[\xi_1,\xi_2,\xi_{3}]$ regarded as a subring of $H^*(BK;\F_3)$.   We first need the following two general results about regular sequences.     
\begin{lem}[\cite{eisenbud2013commutative}, Corolary 17.2]\label{lem:regular_permute}
    If $R$ is a Noetherian local ring and $x_1,\cdots,x_r$ is a regular sequence, of elements in the maximal ideal of $R$, then any permutation of $x_1,\cdots,x_r$ is again a reguler sequence.
\end{lem}

\begin{lem}\label{lem:local_regular}
    Let $\F$ be a field. Let $R = \F[x_1,\cdots,x_n]$. Let $\mathfrak{m}$ be the maximal ideal generated by $x_1,\cdots,x_n$. Let $f_1,\cdots,f_k$ be  homogeneous polynomials in $\mathfrak{m}$. Then $(f_1,\cdots,f_k)$ is a regular sequence in the local ring $R_{\mathfrak{m}}$ if and only if it is a regular sequence in $R$.
\end{lem}

\begin{proof}
The ``only if'' direction is straightforward because $R_{\fm}$ is an integral domain and $R$ is one of its subrings. We just need to prove the ``if'' direction.  Let $(f_1,\cdots,f_k)$ be a regular sequence in $R$. Let $I_j$ be the homogeneous ideal generated by $(f_1,\cdots,f_{j-1})$. If we have a homogeneous polynomial $g\in R$ such that in $R_{\fm}$, we have $gf_j\in (I_j)_{\fm}$, then we have an $h\in\fm$ satisfying
    \begin{equation*}
        (1+h)gf_j\in I_j.
    \end{equation*}
    Since $I_j$ is homogeneous, we have $gf_j\in I_j.$
\end{proof}

    By Lemma~\ref{lem:local_regular} and Lemma~\ref{lem:regular_permute}, a regular sequence of homogeneous elements in $\F_3[\xi_1,\xi_2,\xi_3]$ is still regular after a permutation of its order. We start by checking by hand that $(\sigma_{1},\sigma_{2},
    \sigma_{3})$ is a regular sequence. Therefore, $(\sigma_{2},\sigma_{1},
    \sigma_{3})$ is also a regular sequence, and then so is 
    $(\sigma_2,\ \sigma_1,\ \sigma_1^3 - \sigma_3)$.
    We permute them again to obtain 
    \[(\sigma_2,\ \sigma_1^3 - \sigma_3,\ \sigma_1),\]
    and then
    \begin{equation*}
        (\sigma_2,\ \sigma_1^3 - \sigma_1\sigma_2 - \sigma_3,\ \sigma_1),\quad (\sigma_2,\ \sigma_1^3 - \sigma_1\sigma_2 - \sigma_3,\ \sigma_1\sigma_3),
    \end{equation*}
    and finally
    \[(\sigma_2,\ \sigma_1^3 - \sigma_1\sigma_2 - \sigma_3,\ \sigma_1\sigma_3 - \sigma_1^2\sigma_2).\]
    
    Since $(\varphi^*(\epl_4), \varphi^*(\epl_6), \varphi^*(\epl_8))$ is a regular sequence in $\F_3[\xi_1,\xi_2,\xi_3]$ regarded as a subring of $H^*BK$, we apply Proposition \ref{pro:PUn/Gamma} to conclude that the classifying map $cl:\PU_4/K\to BK$ of the normal $K$-cover $\PU_4\to \PU_4/K$ induces a surjective map 
    $$cl^*:H^*(BK)\twoheadrightarrow H^*(\PU_4/K).$$
    In particular, $cl^*$ is nonzero in the top dimension $\dim(\PU_4/K)=15$. By Proposition \ref{pro:homo genus}, we conclude that $g(\PU_4\to\PU_4/K)\ge 16.$
\end{proof}

\section{The 24 inflection points and 28 bitangents on quartic curves}
This section is devoted to proving Theorem \ref{SG bitangent} and Theorem \ref{SG flex}.
Recall that we defined
\begin{align*}
    \Bb&:=\{\text{nonsingular homogeneous quartic polynomials } F(x,y,z)\}/\C^\times\\
    \Eb&:=\{(F,l_1,\cdots, l_{28}) :  \text{ $l_i$'s are the 28 distinct bitangent lines to $F$}\}
\end{align*}
The natural map $\Eb\to \Bb$ given by $(F,l_1,\cdots, l_{28})\mapsto F$ is a normal $S_{28}$-cover. This cover is not connected. Its monodromy representation $\pi_1(\Bb)\to S_{28}$ is not surjective with an image being a proper subgroup of $S_{28}$ isomorphic to the Steiner group $O_6(\Z/2)$. See e.g. \cite{Harris} for more details. Similarly, recall that
\begin{align*}
    \Bf&:=\{F(x,y,z) : F \text{ defines a quartic curve with 24 distinct inflection points}\}/\C^\times\\
    \Ef&:=\{(F,p_1,\cdots, p_{24}) :  \text{ $p_i$'s are the 24 distinct inflection points in $F$}\}
\end{align*}
Since a generic quartic curve has 24 distinct inflection points, $\Bf$ is an open subspace of the parameter space $\CP^{14}$ of all quartic curves. The projection $\Ef\to \Bf$ given by $(F,p_1,\cdots, p_{24})\mapsto F$ is a normal $S_{24}$-cover. This cover is connected by a result of Harris (page 698 in \cite{Harris}). $\PU_3$ acts naturally on $\CP^2$ and hence also on $\Bb,\Eb,\Bf,$ and $\Ef$. The covering maps $\Eb\to\Bb$ and $\Ef\to\Bf$ are both $\PU_3$-equivariant.

Let us abbreviate
$$\zeta:=e^{2\pi i/7}\qquad\text{ and }\qquad \alpha:=1+\zeta^2+\zeta^4 = (-1+\sqrt{-7})/2.$$
Consider the quartic curve defined by 
\begin{equation}
    \label{eq: Klein quartic 2}
    F(x,y,z) = x^4+y^4+z^4+3\alpha(x^2y^2+y^2z^2+z^2x^2)=0.
\end{equation}
This curve is projective equivalent to the famous Klein quartic curve defined by 
$$x^3y+y^3z+z^3x=0$$
after a change of coordinates give by the matrix
$$\begin{bmatrix}
     1 & 1+\zeta\alpha & \zeta^2+\zeta^6 \\
    1+\zeta\alpha & \zeta^2+\zeta^6 & 1 \\
    \zeta^2+\zeta^6 & 1 & 1+\zeta\alpha    
\end{bmatrix}.$$
We will refer to $F$ in \eqref{eq: Klein quartic 2} as the Klein quartic curve even though it is in different coordinates. The Klein quartic curve is nonsingular and is best known for having the biggest automorphism group among all genus-3 curves. 

\begin{prop}
\label{pro:Klein auto on flex}
The automorphism group $G_{168}$ of the Klein quartic curve is isomorphic to $\mathrm{PSL}_2(\F_7)$, which is a simple group of order 168. This curve has 24 distinct inflection points and 28 distinct bitangent lines. The actions of $G_{168}$ on the set of 24 inflection points and on the set of 28 bitangents are both transitive. The induced homomorphisms
$$G_{168}\to S_{24}\qquad\text{ and }\qquad G_{168}\to S_{28}$$
are both injective. 
\end{prop}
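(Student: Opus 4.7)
The plan is to assemble classical facts about the Klein quartic, many of which go back to Klein himself. The isomorphism $G_{168}\cong \mathrm{PSL}_2(\F_7)$, together with the simplicity of $\mathrm{PSL}_2(\F_7)$ and the order $168 = 84(g-1)$ saturating the Hurwitz bound for $g=3$, is standard. The existence of exactly $28$ distinct bitangents on any smooth plane quartic follows from the classical bijection between bitangents and odd theta characteristics on a non-hyperelliptic genus-$3$ curve. Plücker's formula $3d(d-2) = 24$ with $d = 4$ gives $24$ inflection points on $F$ counted with multiplicity, so the remaining content is the distinctness of the flexes together with the two transitivity and injectivity claims.

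For distinctness and transitivity on the $24$ flexes simultaneously, my strategy exploits the order-$7$ subgroups of $G_{168}$. In the symmetric model $x^3y+y^3z+z^3x=0$, an explicit order-$7$ element $\sigma$ can be written down; its fixed points on $\CP^2$ are the three coordinate points, all of which lie on $F$, and a short tangent-line computation at $[1:0:0]$ identifies the tangent $L:y=0$ and computes $(L\cdot F)_{[1:0:0]} = 3$, so each coordinate point is a flex. By Sylow's theorems and the simplicity of $\mathrm{PSL}_2(\F_7)$, the number of Sylow $7$-subgroups is $n_7 = 8$, and distinct Sylow $7$-subgroups intersect trivially, yielding $8\cdot 3 = 24$ distinct flexes forming a single $G_{168}$-orbit of size $168/7 = 24$. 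This exhausts the Plücker count, so all $24$ flexes are simple and $G_{168}$ acts transitively on them.

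For transitivity on the $28$ bitangents, the cleanest route is via the classical representation of $G_{168}\cong \mathrm{PSL}_2(\F_7)$ on the $2$-torsion of the Jacobian as a symplectic $\F_2$-space of dimension $6$; the induced action on the $28$ odd theta characteristics, and hence on the $28$ bitangents, is well known to be transitive (Klein, Fricke, Edge). A self-contained alternative is to pick one bitangent explicitly, verify that its stabilizer is a subgroup of order $6$ in $G_{168}$, and conclude orbit size $168/6=28$ by orbit-stabilizer.

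Finally, the two injectivity assertions follow formally from simplicity: each kernel is a normal subgroup of the simple group $G_{168}\cong \mathrm{PSL}_2(\F_7)$, hence either trivial or all of $G_{168}$, and transitivity on a set of cardinality greater than one rules out the latter. The main concrete obstacle in the argument is the explicit fixed-point and tangency calculation identifying the fixed points of $\sigma$ as flexes of contact order $3$; everything else is either classical or purely group-theoretic.
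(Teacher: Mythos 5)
Your argument is correct and essentially self-contained, whereas the paper itself offers no proof of this proposition: it is disposed of by a single citation to p.~64 of Elkies' survey on the Klein quartic. So the meaningful comparison is between a genuine classical argument and a reference. Your route is the standard one and it works, but two small gaps should be closed. First, you assert that ``distinct Sylow $7$-subgroups intersect trivially, yielding $8\cdot 3 = 24$ distinct flexes,'' but trivial intersection of the subgroups does not by itself give disjointness of their fixed-point sets; to finish, note that the stabilizer of a point of a compact Riemann surface under a finite automorphism group is cyclic, and $\mathrm{PSL}_2(\F_7)$ has no element of order $14$ or $21$, so no point can be fixed by two distinct Sylow $7$-subgroups. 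The same observation pins down $\mathrm{Stab}([1:0:0])$ as exactly $\langle\sigma\rangle$, which is what orbit--stabilizer needs to produce an orbit of size $24$. Second, for the $28$ bitangents, the first route (transitivity of the $\mathrm{Sp}_6(\F_2)$-action on odd theta characteristics, restricted to $\mathrm{PSL}_2(\F_7)$) is itself a substantial classical fact being cited, so it is at the same level of reliance on the literature as the paper's appeal to Elkies; the orbit--stabilizer alternative you sketch would be genuinely self-contained once you exhibit a bitangent and compute its order-$6$ stabilizer, which you do not carry out. With those two points addressed, the proof is complete, and it gives a reader considerably more than the paper does, which is just the reference.
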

A proof of the proposition above can be found on p.64 in \cite{elkieskleinquartic}.

Consider the following subgroup $H\le G_{168}$ preserving the defining equation of Klein quartic in \eqref{eq: Klein quartic 2}:
\begin{equation}
\label{eq:def of H acting on Klein quartic}
    H:= \Bigg\langle
\begin{bmatrix}
     -1 & 0 & 0 \\
    0 & 1 & 0 \\
    0 & 0 & 1    
\end{bmatrix},\ \ 
\begin{bmatrix}
    1 & 0 & 0 \\
    0 & -1 & 0 \\
    0 & 0 & 1    
\end{bmatrix}
\Bigg\rangle.
\end{equation}
$H$ is isomorphic to $(\Z/2\Z)^2$. The $\PU_3$-orbit of the Klein quartic   gives well-defined maps 
\begin{align}
\label{eq:orbit map flex}
\theta: \PU_3/H\to \Bf 
\qquad\text{ and }\qquad
\theta: \PU_3/H\to \Bb \qquad \text{ where } \qquad\theta(g):=gF.
\end{align}
We use the same $\theta$ to denote the two maps above because they are the same map only with different codomains. 

\begin{prop}
\label{pro:pullback C disconnected}
The total space of the pullback of the normal $S_{28}$-cover $\Eb\to \Bb$ along the map $\theta: \PU_3/H\to \Bb$ is a disjoint union of homeomorphic copies of $\PU_3$
$$\theta^*\Eb\cong \bigcup_{S_{28}/H}\PU_3$$
where the components are in bijection with $H$-cosets in $S_{28}$. Here we identify $H$ as a subgroup of $S_{28}$ using Proposition \ref{pro:Klein auto on flex}.

Similarly, total space of the pullback of the normal $S_{24}$-cover $\Ef\to \Bf$ along the map $\theta: \PU_3/H\to \Bf 
$ is a disjoint union of homeomorphic copies of $\PU_3$
$$\theta^*\Ef\cong \bigcup_{S_{24}/H}\PU_3$$
where the components are in bijection with $H$-cosets in $S_{24}$. Again we identify $H$ as a subgroup of $S_{24}$ using Proposition \ref{pro:Klein auto on flex}.
\end{prop}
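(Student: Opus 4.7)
The plan is to mirror the proof of Proposition~\ref{pro:pullback K disconnected} essentially verbatim, with the Klein quartic (and its stabilizer $H \cong (\Z/2\Z)^2$) playing the role of the Fermat cubic surface (and its stabilizer $K \cong (\Z/3\Z)^3$). The strategy has three steps: unwind the pullback set-theoretically, verify that the induced $\PU_3$-action on the total space is free, and then decompose the total space into $\PU_3$-orbits indexed by $H$-cosets inside the symmetric group.

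First, from the definition of the pullback I would write
$$\theta^*\Eb = \{(gH, l_1,\ldots,l_{28}) : gH \in \PU_3/H,\ l_i\text{'s are the 28 bitangents of } gF\},$$
equipped with the $\PU_3$-action $h \cdot (gH, l_1,\ldots,l_{28}) = (hgH, hl_1,\ldots,hl_{28})$ covering the standard $\PU_3$-action on $\PU_3/H$. The analogous description works for $\theta^*\Ef$ with inflection points in place of bitangents.

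The essential point is freeness of this $\PU_3$-action. If $h$ fixes $(gH, l_1,\ldots,l_{28})$, then $g^{-1}hg \in H$ and $g^{-1}hg$ permutes the 28 bitangents of the Klein quartic $F$ trivially. By Proposition~\ref{pro:Klein auto on flex}, the composition $H \hookrightarrow G_{168} \hookrightarrow S_{28}$ is injective, so $g^{-1}hg = e$, hence $h = e$. The identical argument with $G_{168} \hookrightarrow S_{24}$ handles the inflection point case. This is the only place where the particular choice of Klein quartic intervenes, and it is really the main obstacle, though it is disposed of by invoking a known fact.

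With freeness in hand, connectedness of $\PU_3$ implies that each $\PU_3$-orbit is already an entire connected component homeomorphic to $\PU_3$. To identify the set of components with $S_{28}/H$, I would fix a base enumeration of the 28 bitangents of $F$ to produce a base point $p_0$ in the fiber over $eH$; the $S_{28}$-action on this fiber is simply transitive, and two points $\sigma \cdot p_0$ and $\tau \cdot p_0$ lie in the same $\PU_3$-orbit if and only if $\sigma^{-1}\tau \in H$, where $H \le S_{28}$ is the faithful image provided by Proposition~\ref{pro:Klein auto on flex}. This yields the claimed bijection with $S_{28}/H$, and the analogous argument gives $S_{24}/H$ for $\theta^*\Ef$.
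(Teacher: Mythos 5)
Your proposal is correct and follows essentially the same route as the paper: unwind the pullback set-theoretically, observe that the natural $\PU_3$-action is free because Proposition~\ref{pro:Klein auto on flex} makes $H\hookrightarrow S_{28}$ (resp.\ $H\hookrightarrow S_{24}$) injective, and conclude that the total space splits into $\PU_3$-orbits indexed by $H$-cosets. Your writeup spells out the conjugation step in the freeness argument and the identification of components with $S_{28}/H$ in slightly more detail than the paper, which simply asserts both, but there is no substantive difference in method.
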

\begin{proof}
    By definition of pullback,
    $$\theta^*\Ef = \{(gH,p_1,...,p_{24}) : gH\in\PU_3/C \text{ and $p_i$'s are the 24  inflection points on the curve $gF$}\}.$$
    This space admits natural actions by $\PU_3$  where $h\cdot (gC,p_1,...,p_{24}) = (hgC,hp_1,...,hp_{24})$. This action is free by Proposition \ref{pro:Klein auto on flex}. Hence, $\theta^*\Ef$ is a disjoint union of $\PU_3$-orbits, where each orbit is a copy of $\PU_3$. Since $\theta^*\Ef$ is an $S_{24}$-cover of $\PU_3/H$, the components of $\theta^*\Ef$ must be in bijection with $S_{24}/H$. 

    The proof of the statement for $\theta^*\Eb$ is completely analogous. 
\end{proof}
Now we have
\begin{align*}
    g(\Ef\to\Bf)&\ge g(\theta^*\Ef\to\PU_3/H)    &\text{by Proposition \ref{pro:pullback SG}}\\
    &=g(\PU_3\to \PU_3/H)&\text{by Proposition \ref{pro:pullback C disconnected} and Proposition \ref{pro:disconnected covers}}
\end{align*}
\begin{align*}
    g(\Eb\to\Bb)&\ge g(\theta^*\Eb\to\PU_3/H)    &\text{by Proposition \ref{pro:pullback SG}}\\
    &=g(\PU_3\to \PU_3/H)&\text{by Proposition \ref{pro:pullback C disconnected} and Proposition \ref{pro:disconnected covers}}
\end{align*}
The proof of Theorem \ref{SG bitangent} and Theorem  \ref{SG flex} will be complete after we prove the following proposition:

\begin{prop}
\label{pro: genus H}
    $g(\PU_3\to\PU_3/H)=9.$
\end{prop}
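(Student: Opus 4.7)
The plan is to adapt the strategy of Proposition~\ref{pro: genus K27} to this setting, using $\F_2$-coefficients (valid because $p=2$ does not divide $n=3$). Since $\dim\PU_3 = 8$ and $H$ is finite, $\PU_3/H$ is a closed $8$-manifold, so Corollary~\ref{cor:coho dim} gives the upper bound $g(\PU_3\to\PU_3/H)\le 9$ immediately. For the matching lower bound, I will show that the classifying map $cl\colon\PU_3/H\to BH$ of the principal $H$-bundle $\PU_3\to\PU_3/H$ induces a surjection on $\F_2$-cohomology; in particular $cl^*$ will be non-zero in degree $8$, and Proposition~\ref{pro:homo genus} then yields $g \ge 9$.

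To apply Proposition~\ref{pro:PUn/Gamma} in the case $(p,q)=(2,2)$, the main computation is the map $\varphi^*\colon H^*(B\PU_3;\F_2)\to H^*(BH;\F_2)$. The inclusion $H\hookrightarrow\PU_3$ lifts, via the unitary representatives $\mathrm{diag}(-1,1,1),\ \mathrm{diag}(1,-1,1)$, to an embedding into the diagonal maximal torus $T\subset\U_3$, giving a factorization
\[H^*(B\PU_3;\F_2)\xrightarrow{f^*} H^*(B\U_3;\F_2)\to H^*(BT;\F_2)\to H^*(BH;\F_2).\]
Proposition~\ref{pro:n=3generators} reduced mod $2$ gives $f^*(\epl_4) = c_1^2+c_2$ and $f^*(\epl_6)=c_1c_2+c_3$; the second map sends $c_i\mapsto \sigma_i(\tau_1,\tau_2,\tau_3)$; and the third sends $\tau_1\mapsto v_1^2,\ \tau_2\mapsto v_2^2,\ \tau_3\mapsto 0$ (since mod $2$ the first Chern class on $BU(1)$ pulls back to the square of the degree-$1$ generator under the standard inclusion $\Z/2\hookrightarrow U(1)$). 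Composing,
\begin{align*}
\varphi^*(\epl_4) &= v_1^4 + v_1^2 v_2^2 + v_2^4,\\
\varphi^*(\epl_6) &= v_1^4 v_2^2 + v_1^2 v_2^4.
\end{align*}

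The main obstacle is checking that $(\varphi^*(\epl_4),\varphi^*(\epl_6))$ is a regular sequence in $\F_2[v_1,v_2]$; I cannot directly invoke the lemmas about permuting elementary symmetric polynomials used in Proposition~\ref{pro: genus K27}. The key observation is that in characteristic $2$ these expressions simplify as
\[\varphi^*(\epl_4) = (v_1^2+v_1v_2+v_2^2)^2,\qquad \varphi^*(\epl_6)=v_1^2 v_2^2(v_1+v_2)^2.\]
The quadratic $a := v_1^2+v_1v_2+v_2^2$ is irreducible in $\F_2[v_1,v_2]$ (its dehomogenization $u^2+u+1$ has no root in $\F_2$), hence is coprime to each of the linear factors $v_1,\ v_2,\ v_1+v_2$ in the UFD $\F_2[v_1,v_2]$; so multiplication by $v_1^2 v_2^2(v_1+v_2)^2$ is injective modulo $a^2$, giving regularity. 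Proposition~\ref{pro:PUn/Gamma} then produces the surjection $cl^*\colon H^*(BH;\F_2)\twoheadrightarrow H^*(\PU_3/H;\F_2)$, and since $\PU_3/H$ is a closed connected $8$-manifold, $H^8(\PU_3/H;\F_2)\neq 0$ by Poincar\'e duality. Hence $cl^*$ is non-zero in the top dimension, and Proposition~\ref{pro:homo genus} completes the lower bound.
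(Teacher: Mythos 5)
Your proof is correct and follows essentially the same route as the paper: you use the same factorization through the maximal torus to compute $\varphi^*(\epsilon_4)=(v_1^2+v_1v_2+v_2^2)^2$ and $\varphi^*(\epsilon_6)=v_1^2v_2^2(v_1+v_2)^2$, verify regularity via coprimality in the UFD $\F_2[v_1,v_2]$, and combine Propositions~\ref{pro:PUn/Gamma} and~\ref{pro:homo genus} with the dimension upper bound from Corollary~\ref{cor:coho dim}. (Minor note: the paper's statement that $H\cong(\Z/4\Z)^2$ is a typo for $(\Z/2\Z)^2$, which you correctly used.)
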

\begin{proof}
    The upper bound $g(\PU_3\to\PU_3/H)\le 9$ follows from  Corollary \ref{cor:coho dim} together with the fact that $\PU_3/H$ is a compact manifold of dimension 8. We focus on the lower bound.

    In this proof we will only use cohomology with $\F_2$-coefficients and thus will suppress $\F_2$ in notation. As in previous sections, let $\varphi:BH\to B\PU_3$ denote the map induced by the inclusion $H\hookrightarrow \PU_3$. Notice that the inclusion $H\hookrightarrow \PU_3$ is a composition of
    $H\hookrightarrow T\hookrightarrow \U_3\twoheadrightarrow \PU_3$ where $T$ denotes a maximal torus of $\U_3$ consisting of matrices with 0's off diagonal. Therefore, we have
    \begin{equation}
    \label{eq:FFactorH}
    \begin{tikzcd}
	H^*(B\PU_3) & H^*(B\U_3) & H^*(BT) & H^*(BH)
	\arrow[from=1-1, to=1-2]
	\arrow["\varphi^*"', bend right=10, from=1-1, to=1-4]
	\arrow[from=1-2, to=1-3]
	\arrow[from=1-3, to=1-4]
    \end{tikzcd}
    \end{equation}
        the composition of which is exactly $\varphi^*$. Proposition \ref{pro:n=3generators} gave 
    \begin{equation}
        \label{eq:CohomologyOfBPU3}
        H^*(B\PU_3)\cong \F_2[\epsilon_4,\epsilon_6],\qquad\qquad\qquad
        |\epsilon_i|=i.
    \end{equation}
    We also have the following standard results
\begin{align}
    \label{eq:CohomologyOfBU3 mod 2}
    &H^*(B\U_3) \cong \F_2[c_1,c_2,c_3], &|c_i|=2i,\\
    \label{eq:CohomologyOfBT3}
    &H^*(BT) \cong \F_2[\tau_1,\tau_2,\tau_3], &|\tau_i|=2,\\
    \label{eq:CohomologyOfBH}
    &H^*(BH) \cong \F_2[u,v],& |u|=|v|=1.
\end{align}
The element $c_i$ in (\ref{eq:CohomologyOfBU3 mod 2}) is the $i$-th Chern class modulo 2. The last isomorphism (\ref{eq:CohomologyOfBH}) follows from $H\cong (\Z/4\Z)^2$. Now we will describe the three maps in (\ref{eq:FFactorH}). The first map $H^*(B\PU_3)\to H^*(B\U_3)$ was already determined in Proposition \ref{pro:n=3generators}. The second map $H^*(B\U_3)\to H^*(BT)$ takes $c_i$ to the $i$-th elementary symmetric polynomial $\sigma_i$ in variables $\tau_j$'s. The third map $H^*(BT)\to H^*(BH)$ takes $\tau_1\mapsto u^2$ and $\tau_2\mapsto v^2$ and $\tau_3\mapsto 0$ as we can see from the explicit embedding $H\hookrightarrow T$  defined in \eqref{eq:def of H acting on Klein quartic}. Hence, if we compose the three maps in (\ref{eq:FFactorH}), we conclude that 
    \begin{align}
    \label{phi eps4,6}
    \varphi^* (\epsilon_4)&
    = u^4+v^4+u^2v^2= (u^2+uv+v^2)^2\\
    \nonumber
    \varphi^* (\epsilon_6)&
    =u^2v^2(u^2+v^2) = u^2v^2(u+v)^2.
    \end{align}
The sequence $(\varphi^*(\epl_4),\varphi^*(\epl_6))$ forms a regular sequence in $H^*(BH)$ in the sense of Definition \ref{def:regular seq} because $H^*(BH)\cong \F_2[u,v]$ is a unique factorization domain and the two elements $\varphi^*(\epl_4)$ and $\varphi^*(\epl_6)$ in \eqref{phi eps4,6} are relatively prime. Therefore, by Proposition \ref{pro:PUn/Gamma}, we conclude that the classifying map $cl:\PU_3/H\to BH$ of the normal $H$-cover $\PU_3\to \PU_3/H$ induces a surjective map 
$$cl^*:H^*(BH)\twoheadrightarrow H^*(\PU_3/H).$$
In particular, $cl^*$ is nonzero in the top dimension $H^8$. By Proposition \ref{pro:homo genus}, we conclude that $g(\PU_3\to\PU_3/H)\ge 9$.
\end{proof}

\section{Proof of Theorem \ref{TC main theorems}}
We first prove part (1) of Theorem \ref{TC main theorems} which states that the topological complexity of the problem $\linepro$ is at least $15$ for any $\epsilon$ small enough. Suppose that there exists an algorithm with $k$ many output leaves (and hence $k-1$ many branching nodes) that solves the problem $\linepro$. Our goal is to show that $k-1\ge 15$. 

For $i=1,\cdots,k$, let $A_i$ denote the subset of $\Bl$ consisting of all those inputs that will arrive at the $i$-th output leaf of the given algorithm. Then $\Bl$ as a set is a disjoint union of $A_i$'s. Each $A_i$ is locally closed because it is the intersection of finitely many closed subsets consisting of inputs satisfying $h\le 0$ at those branching nodes where the path from the input root to the $i$-th output leaf turns to the right, and finitely many open subsets consisting of inputs satisfying $h> 0$ at those branching node where the same path turns to the left.  Moreover, the algorithm gives a map $\psi_i:A_i\to G(2,4)^{\times 27}$, where $G(2,4)$ denotes the space of all lines in $\CP^3$, such that for any $F\in A_i$, we have $\psi_i(F)=(l'_1,\cdots,l'_{27})$ where each $l'_i$ is  $\epsilon$-close to a line $l_i$ where $l_i$'s are the 27 distinct lines on the cubic surface $F$. By the Tietze Extension Theorem, $\psi_i$ can be extended to an open subset $U_i\supseteq A_i$ satisfying the same property.

Recall that the $\PU_4$-orbit of the Fermat cubic surface $F$ in $\Bl$ gives a well-defined map $\eta:\PU_4/K\to \Bl$ as defined in (\ref{eq:orbit map lines}). Let
$$\epl_0:=\frac{1}{2}\text{ minimum distance between two distinct lines on $gF$ among all } g\in \PU_4.$$
Since $\PU_4$ is compact, $\epl_0$ is strictly positive. Let $V_i:=\eta^{-1} U_i$. By our construction, for any $gK\in V_i$, the point $\psi_i(\eta(gK))$ is $\epsilon$-close to a line in $\eta(gK)=g\cdot F$. Moreover, this line is unique if $\epsilon<\epsilon_0$. Let this unique line be denoted by $s_i(gK)$. In summary, we have open sets $W_i$ for $i=1,\cdots,k$ that form an open cover of $\PU_4/K$ such that on each $W_i$, there exists a continuous section $s_i:W_i\to \eta^*\El$ where $\eta^*\El$ is the pullback of $\El\to\Bl$ along $\eta$. By Propositions \ref{pro:disconnected covers}, \ref{pro:pullback K disconnected}  and \ref{pro: genus K27}, it must be that $$k\ge g(\eta^*\El\to \PU_4/K)=g(\PU_4\to\PU_4/K)= 16.$$ 

The proofs of part (2) and (3) are the same as the proof of (1) above. We use Proposition \ref{pro: genus H} in place of Proposition \ref{pro: genus K27}. 
\qed

\bibliographystyle{abbrv}
\bibliography{main.bib}

\end{document}